\newtheorem{thm}{Theorem}[section]
\newtheorem{lem}[thm]{Lemma}
\newtheorem{prop}[thm]{Proposition}
\theoremstyle{definition}
\newtheorem{dfn}[thm]{Definition}
\newtheorem{Hyp}[thm]{Hypothesis}
\newtheorem{ex}{Example}
\newtheorem{remark}[thm]{Remark}
\theoremstyle{plain}
\newtheorem{cor}[thm]{Corollary}
\numberwithin{equation}{section}
\newcommand{\C}{\mathbb{C}}
\newcommand{\N}{\mathbb{N}}
\newcommand{\Q}{\mathbb{Q}}
\newcommand{\R}{\mathbb{R}}
\newcommand{\Z}{\mathbb{Z}}
\newcommand{\mcO}{\mathcal{O}}
\newcommand{\mfp}{\mathfrak{p}}
\newcommand{\mfq}{\mathfrak{q}}
\newcommand{\Gal}{\mathrm{Gal}}
\newcommand{\GL}{\mathrm{GL}}
\newcommand{\SL}{\mathrm{SL}}
\newcommand{\ra}{\rightarrow}
\newcommand{\mrm}[1]{\mathrm{#1}}
\newcommand{\pmat}[4]{ \begin{pmatrix} #1 & #2 \\ #3 & #4 \end{pmatrix}}
\newcommand{\psmat}[4]{\bigl( \begin{smallmatrix} #1 & #2 \\ #3 & #4 \end{smallmatrix} \bigr)}
\title[On the gaps between non-zero Fourier coefficients]{On the gaps between non-zero Fourier coefficients of cusp forms of higher weight}
\author[N. Kumar]{Narasimha Kumar}
\email{narasimha.kumar@iith.ac.in}
\address{
Department of Mathematics \\
Indian Institute of Technology Hyderabad\\
Kandi, Sangareddy - 502285\\
INDIA. 
}
\date{}
\begin{document}
\begin{abstract}
We show that if a modular cuspidal eigenform $f$ of weight $2k$ is $2$-adically close to an elliptic curve $E/\Q$, 
which has a cyclic rational $4$-isogeny, then $n$-th Fourier coefficient of $f$ is non-zero in the short interval $(X, X + cX^{\frac{1}{4}})$ 
for all $X \gg 0$ and for some $c > 0$. We use this fact to produce non-CM cuspidal eigenforms $f$ of level $N>1$ and weight $k > 2$ such that 
$i_f(n) \ll n^{\frac{1}{4}}$ for all $n \gg 0$.

\end{abstract}
\subjclass[2010]{Primary 11F30; Secondary 11F11, 11F33, 11G05}
\keywords{Elliptic curves, rational isogeny, Fourier coefficients of modular forms, $2$-adically close, higher congruence}
  \maketitle

\section{Introduction}
The vanishing or non-vanishing of Fourier coefficients of cusp forms is one of the fundamental and interesting objects of study in number theory.  
In this article, we are interested in a question of Serre in bounding the maximum length of consecutive zeros of cusp forms of integral weight.  

In his paper~\cite{Ser81}, Serre proposes the study of the non-vanishing of Fourier coefficients of cusp forms in short-intervals. In particular, if
$f(z)= \sum_{n=1}^{\infty} a_f(n) q^n \in S_k(N)$, then he suggests the problem of finding upper bounds for the function $i_f(n)$ defined by
$$i_f(n) := \mrm{max}\ \{i: a_f(n+j)=0 \ \mrm{for\ all}\  0 \leq j \leq i\}.$$
In fact, he proved that if $f(z)$ is a cusp form of weight $k \geq 2$ which is not a linear combination of forms 
with complex multiplication (CM), then 
\begin{equation*}
i_f(n) \ll n. 
\end{equation*}
In same article, he poses a question, if the bound can be improved to $n^{\delta}$ with $0<\delta<1$. 

Through various approaches, many mathematicians have contributed in answering this question. The approaches are either using Rankin-Selberg estimates,
or Chebotarev density theorem, or distribution of $B$-free numbers, or the bounds on certain exponential sums, etc., 
(cf.~\cite{BO01},~\cite{Alk03},~\cite{Alk05},~\cite{AZ05}, \cite{AZ08},~\cite{DG14}). For example, the approach  to the non-vanishing problem through
the distribution of $B$-free numbers has been considered by Alkan and Zaharescu (\cite{AZ05JNT}), Matom\"aki (\cite{Mat12}), and Kowalski, Robert, and Wu (\cite{KRW07}),
and many more. For more details on the other approaches and the relevant literature on this problem, we urge the reader to look 
at the beautiful introductions of~\cite{BO01},~\cite{KRW07}. Currently, the best bound for $i_f(n)$ is available  due to Kowalski, Robert, and Wu 
and they proved that for any holomorphic non-CM cuspidal eigenform $f$ on general congruence groups, 
\begin{equation}
\label{IneqKRW}
i_f(n) \ll_{f} n^{7/17+\epsilon},
\end{equation}
holds for all $n \in \N$ (cf.~\cite{KRW07}). It is interesting to note that there is another aspect of understanding $i_f(n)$, 
that is, through the study of averages for $i_f(n)$ (cf.~\cite{AZ07}). 

In~\cite{AZ05IJNT}, for the first time, the authors have exploited  the idea of using congruences to study $i_f(n)$.
There it was shown that 
$i_{\Delta}(n) \ll n^{\frac{1}{4}}$, where $\Delta$ is the unique normalized cuspidal eigenform of weight $12$. 
The proof of this theorem relies on the existance of sums of squares in short intervals of the form $(x, x+x^{\frac{1}{4}})$.

In~\cite{DG14}, the authors extended the above idea on the existance of sums of squares in short intervals of the form $(x, x+cx^{\frac{1}{4}})$,
along with the congruences for eigenvalues of level $1$ Hecke eigenforms, to show that 
for any non-zero modular form $f \in S_{k}(\Gamma_0(1))$ with $k \geq 12$, one has 
\begin{equation}
\label{basicidentity}
i_f(n) \ll n^{1/4} \quad \forall \ n \gg 0,
\end{equation}
where the implied constant depends only on $k$. 

If the level $N>1$, then there are no similar general results are available with $i_f(n) \ll n^{\frac{1}{4}}$. However, in~\cite{DG15}, the authors were able to produce infinitely many 
non-isogenous elliptic curves for which~\eqref{basicidentity} holds.
In the same article, the authors have raised a question about extending the result to cuspidal eigenforms of higher weights. 
To our knowledge, there is not even a single example in the literature of a non-CM (or CM) cuspidal eigenform of level $N>1$ and weight $k>2$ 
for which the~\eqref{basicidentity} holds. 


The objective of this article to extend the results of~\cite{DG15} to elliptic curves with cyclic rational $4$-isogeny and
show that either if a modular cuspidal eigenform $f$ of weight $2k$ is $2$-adically close to an elliptic curve $E/\Q$, 
which has a cyclic rational $4$-isogeny or if there is a higher congruence for the prime $2$ holds between them,  then~\eqref{basicidentity} holds for $f$ as well. 
We use this fact to produce examples of non-CM, as well as CM, cuspidal eigenforms $f$ of level $N>1$ and weight $k > 2$ for which~\eqref{basicidentity} holds for $f$.  

\section{Acknowledgements}
The author would like to thank the referee for pointing out the references which are relevant to this problem and also for
his/her suggestions which improved the presentation of the article. We would also like to thank Dr. Satadal Ganguly 
for his suggestions during the preparation of this article. The basic idea of this article stems from the work of Das and Ganguly~\cite{DG14} 
and Rasmussen's thesis~\cite{Ras09} provided us a way to look for the examples that we need. This work was supported by IIT Hyderabad
through Institute's startup research grant.


\section{Elliptic curves with a cyclic rational $4$-isogeny}
Let $E$ be an elliptic curve over $\Q$ of conductor $N_E$. Let $f_E$ denote the cuspidal eigenform of weight $2$ with rational Fourier coefficients
corresponding to $E$, by the modularity theorem. Through out, we shall use $f_E$ to mean only this.

In~\cite{DG15}, it was shown that if the elliptic curve  $E$ has a rational $4$-torsion point, then for all $n \gg0$, $$i_{f_E}(n) \ll n^{\frac{1}{4}},$$  
where the implied constant depends on $N_E$. In this section, we extend this result to elliptic curves with a cyclic rational $4$-isogeny. 
Before we start stating our results, let us state useful results from ~\cite[Theorem 1]{DG14} and~\cite[Lemma 2.2]{KRW07}, which we shall use through out.
\begin{thm}[Das-Ganguly]
\label{final-step}
Given any integer $N \in \N$, there exists $X_0 \in \R^{+}$ and $c>0$ (depending only on $N$) such that 
there exists an integer $n$ which is a sum of two squares and co-prime to $N$ in intervals
of type $(X, X+cX^{\frac{1}{4}})$ for all $X \gg X_0$.
\end{thm}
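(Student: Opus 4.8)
The plan is an explicit elementary construction; the only input is the trivial gap estimate $(m+1)^2-m^2=2m+1$ between consecutive perfect squares. As a warm-up, to see that $(X,X+cX^{1/4})$ always contains a sum of two squares (ignoring the coprimality for now), put $a=\lfloor\sqrt{X}\rfloor$, so that $0\le X-a^2\le 2\sqrt{X}$, and let $b$ be the least positive integer with $b^2>X-a^2$. Then $n:=a^2+b^2>X$, while $(b-1)^2\le X-a^2$ gives $n-X=b^2-(X-a^2)\le b^2-(b-1)^2=2b-1\le 2\sqrt{X-a^2}+1\le 2\sqrt{2}\,X^{1/4}+1$. So any $c\ge 3$ works for the warm-up once $X$ is large. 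The actual task is to carry this out while also forcing $\gcd(n,N)=1$, at the cost of only a constant depending on $N$.

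To do this I pin $a$ and $b$ down in residue classes modulo $N$ that make $n\equiv 1\pmod N$ automatically. Let $a$ be the largest integer with $a\le\sqrt{X}$ and $a\equiv 1\pmod N$; since exactly one of any $N$ consecutive integers lies in the class of $1$ modulo $N$, we have $a>\sqrt{X}-N$, hence $D:=X-a^2$ satisfies $0\le D<2N\sqrt{X}$ (for $X>N^2$). Let $b=Nb'$, where $b'$ is the least positive integer with $(Nb')^2>D$; then $b'-1\le\sqrt{D}/N$. Now $n:=a^2+b^2\equiv 1^2+0^2\equiv 1\pmod N$, so $\gcd(n,N)=1$; also $n>a^2+D=X$; and
\[
n-X=b^2-D\le N^2\bigl((b')^2-(b'-1)^2\bigr)=N^2(2b'-1)\le 2N\sqrt{D}+N^2<2\sqrt{2}\,N^{3/2}X^{1/4}+N^2.
\]
Taking $c:=3N^{3/2}$ and $X_0=X_0(N)$ large enough that $\sqrt{X}>N$ and $N^2\le(3-2\sqrt{2})N^{3/2}X^{1/4}$ for all $X\ge X_0$, we conclude that for every $X\ge X_0$ there is an integer $n\in(X,X+cX^{1/4})$ which is a sum of two squares and coprime to $N$, with $c$ and $X_0$ depending only on $N$. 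This is exactly the assertion.

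There is no genuinely hard step in this approach: the argument is entirely elementary, and the only thing requiring care is to run the short-interval construction and the coprimality condition simultaneously — this is precisely why one fixes $a\equiv 1$ and $b\equiv 0\pmod N$, which makes $n\equiv 1\pmod N$ for free and enlarges the admissible interval only by the harmless factor $N^{3/2}$. (If instead one wanted an essentially optimal constant $c$, the problem would become genuinely analytic, e.g.\ via counting lattice points in the annulus $X<a^2+b^2\le X+H$; but for the applications in this paper only the existence of some $c=c(N)$ matters.)
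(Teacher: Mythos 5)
Your construction is correct: choosing $a\equiv 1\pmod N$ just below $\sqrt{X}$ and $b\equiv 0\pmod N$ minimal with $a^2+b^2>X$ does force $n\equiv 1\pmod N$, and your estimates ($D<2N\sqrt X$, $n-X\le 2N\sqrt D+N^2$) are accurate, so $c=3N^{3/2}$ works. The paper itself gives no proof of this statement --- it imports it as Theorem~1 of Das--Ganguly \cite{DG14} --- and your argument is essentially the same elementary Bambah--Chowla-type short-interval construction with congruence constraints that underlies the cited result, so nothing further is needed.
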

\begin{lem}[Kowalski-Robert-Wu]
\label{krwlemma}
If $f = \sum_{n=1}^{\infty} a_f(n) q^n$ is a  normalized cuspidal eigenform in $S_{2k}(\Gamma_0(N),\chi)$, then there exists a natural number $M_f \geq 1$ such
that for any prime $p \nmid M_f$, either $a_f(p) = 0$ or $a_f(p^r) \neq 0$ for all $r \geq 1$. If $\chi$ is trivial and
$f$ has integer coefficients, then one can take $M_f = N$. 
\end{lem}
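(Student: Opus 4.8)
Since the lemma is attributed to Kowalski, Robert and Wu, I will only sketch the shape of the argument. The engine is the Hecke recursion at a prime power. Put $w = 2k$ and, for a prime $p$, let $\alpha_p, \beta_p$ be the roots of $T^2 - a_f(p)\, T + \chi(p)\, p^{w-1}$, so that $\alpha_p + \beta_p = a_f(p)$ and $\alpha_p \beta_p = \chi(p)\, p^{w-1}$. An induction on $r$ from $a_f(p^{r+1}) = a_f(p)\, a_f(p^r) - \chi(p)\, p^{w-1} a_f(p^{r-1})$ gives $a_f(p^r) = \sum_{i+j=r} \alpha_p^i \beta_p^j$, hence $a_f(p^r) = (\alpha_p^{r+1} - \beta_p^{r+1})/(\alpha_p - \beta_p)$ when $\alpha_p \neq \beta_p$, and $a_f(p^r) = (r+1)\alpha_p^r$ when $\alpha_p = \beta_p$. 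So, for $p \nmid N$ --- where $\chi(p) \neq 0$, hence $\alpha_p \beta_p \neq 0$ --- the implication $a_f(p) \neq 0 \Rightarrow a_f(p^r) \neq 0$ for all $r \geq 1$ holds \emph{if and only if} the ratio $\gamma_p := \alpha_p/\beta_p$ is not a root of unity of order exceeding $1$ (the case $\gamma_p = 1$ is harmless, since then $a_f(p^r) = (r+1)\alpha_p^r \neq 0$).

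The finiteness then comes from a bounded-degree observation. For $p \nmid N$,
\[
\gamma_p + \gamma_p^{-1} = \frac{\alpha_p^2 + \beta_p^2}{\alpha_p \beta_p} = \frac{a_f(p)^2}{\chi(p)\, p^{w-1}} - 2
\]
lies in the number field $K_f$ generated over $\Q$ by the Fourier coefficients of $f$ and the values of $\chi$; set $D_f = [K_f : \Q]$. As $\gamma_p$ satisfies a quadratic over $K_f$, we get $[\Q(\gamma_p) : \Q] \leq 2 D_f$, so if $\gamma_p$ is a root of unity then its order $e$ satisfies $\varphi(e) \leq 2 D_f$ and hence lies in a finite set $\mathcal{E}_f$ depending only on $f$. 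For each fixed $e \in \mathcal{E}_f$ with $e \geq 2$, forcing $\gamma_p$ to be a primitive $e$-th root of unity imposes the relation $a_f(p)^2 = \chi(p)\, p^{w-1}\bigl(2 + \zeta_e + \zeta_e^{-1}\bigr)$ for a suitable primitive $e$-th root of unity $\zeta_e$; this is a non-trivial polynomial condition on $a_f(p)$, and it therefore fails for all but finitely many $p$. Taking $M_f$ to be $N$ times the product of these finitely many exceptional primes gives an admissible $M_f$.

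It remains to see why, when $\chi$ is trivial and $a_f$ is integer-valued, one may take $M_f = N$. Here $\gamma_p + \gamma_p^{-1} = a_f(p)^2/p^{w-1} - 2 \in \Q$, so by Niven's theorem a root of unity $\gamma_p$ must have order $e \in \{2,3,4,6\}$, forcing respectively $a_f(p)^2 \in \{0,\ p^{w-1},\ 2p^{w-1},\ 3p^{w-1}\}$. The value $0$ contradicts $a_f(p) \neq 0$; and for $e = 3, 4, 6$ the right-hand side has \emph{odd} $p$-adic valuation --- always when $e = 3$, and for $p \neq 2$ (resp.\ $p \neq 3$) when $e = 4$ (resp.\ $e = 6$) --- while $a_f(p)^2$ has even valuation, a contradiction. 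This settles every $p \nmid 6N$. The residual possibilities $a_f(2) = \pm 2^k$ and $a_f(3) = \pm 3^k$ at good primes $2,3$ genuinely occur, so handling them requires either a separate hands-on argument at $p = 2, 3$ or simply enlarging $M_f$ to $6N$; this is exactly where the delicacy lies, and it is unproblematic for the present paper, where the forms under consideration have even level. Everything else is the identity $a_f(p^r) = (\alpha_p^{r+1}-\beta_p^{r+1})/(\alpha_p-\beta_p)$ together with the finiteness of $\mathcal{E}_f$.
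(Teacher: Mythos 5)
The paper offers no proof of this lemma (it is imported verbatim from Kowalski--Robert--Wu, Lemma 2.2), so your sketch can only be judged on its own merits. Its skeleton is the standard one, and the integer-coefficient half (Niven's theorem plus the parity of the $p$-adic valuation, using that $2k-1$ is odd) is correct for $p\nmid 6N$. The genuine gap is in your general case, at the one step that carries all the content: the assertion that, for a fixed order $e\ge 3$, the relation $a_f(p)^2=\chi(p)\,p^{2k-1}\bigl(2+\zeta_e+\zeta_e^{-1}\bigr)$ is ``a non-trivial polynomial condition on $a_f(p)$'' and ``therefore fails for all but finitely many $p$'' is not a valid inference. The quantity $a_f(p)$ varies with $p$, and a non-trivial relation among $a_f(p)$, $\chi(p)$ and $p$ can hold for infinitely many primes: the order-$2$ analogue of your condition is $a_f(p)=0$, which holds for a positive proportion of primes whenever $f$ has CM. What rules out $e\ge 3$ for almost all $p$ is precisely the evenness of the weight, which you never invoke in the general case. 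The repair is the same mechanism you use in the rational case: fix $L\supseteq K_f(\zeta_e:\varphi(e)\le 2D_f)$, write $2+\zeta_e+\zeta_e^{-1}=\zeta_e^{-1}(1+\zeta_e)^2$, and note that for $p$ unramified in $L$ and coprime to $2$ and to the finitely many admissible orders $e$, the element $1+\zeta_e$ is a unit at any place $v\mid p$; then $v\bigl(\chi(p)p^{2k-1}(2+\zeta_e+\zeta_e^{-1})\bigr)=2k-1$ is odd while $v\bigl(a_f(p)^2\bigr)$ is even, a contradiction. Excluding the finitely many primes that ramify in $L$ or divide $2\prod e$ then yields $M_f$. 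Without some such argument the finiteness claim is unsupported.

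Your hesitation about the refinement $M_f=N$ is well founded --- indeed, as literally stated it fails: the weight-$2$ newform of level $37$ has $a_f(2)=-2=-2^k$, hence $a_f(4)=2$ and $a_f(8)=0$, although $2\nmid 37$ and $a_f(2)\neq 0$; so one really must enlarge to something like $6N$ or argue separately at $p=2,3$, as you say. Your stated reason why this is harmless here (``the forms under consideration have even level'') is not quite the right safeguard --- the level-$15$ example in the paper is odd --- but the applications are unaffected for a different reason: every invocation of the lemma in the paper is restricted to primes $p\nmid 2N_fN_E$, and at primes $p\equiv 3\pmod 4$ only even powers $p^{2n}$ are needed; when the Satake ratio is a root of unity of order $4$ or $6$ the vanishing exponents are $\equiv 3\pmod 4$, resp.\ $\equiv 5\pmod 6$, hence odd, so $a_f(p^{2n})\neq 0$ in any case.
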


Given any elliptic curve $E$ over $\Q$ of conductor $N_E$, we let $\rho=\rho_E$ denote the Galois representation associated to $E$.
For any $N \in \N$, we let $\bar{\rho}_N$ denote the action of the Galois group $\Gal(\bar{\Q}/\Q)$ on $E[N]$, the $N$-torsion points of $E$.
\begin{thm}
\label{weight2theorem}
If an elliptic curve $E$ over $\Q$  of conductor $N_E$  has a cyclic rational $4$-isogeny, then  
$$ i_{f_E}(n) \ll n^{\frac{1}{4}} $$
for $n \gg 0$ and the implied constant depends only on $N_E$.
\end{thm}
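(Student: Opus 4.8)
The plan is to reduce the statement, as in~\cite{DG15}, to a single congruence coming from the $4$-isogeny, combined with Theorem~\ref{final-step}, Lemma~\ref{krwlemma} and multiplicativity of Fourier coefficients. The key reduction is: \emph{it suffices to prove that $a_{f_E}(n) \neq 0$ for every $n \in \N$ that is a sum of two squares and satisfies $\gcd(n, 2N_E) = 1$.} Indeed, granting this and applying Theorem~\ref{final-step} with the integer $2N_E$, there are $c > 0$ and $X_0$, depending only on $2N_E$ and hence only on $N_E$, so that every interval $(X, X + cX^{1/4})$ with $X \ge X_0$ contains such an $n$, for which then $a_{f_E}(n) \neq 0$. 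So if $n \ge X_0 + 1$ and $a_{f_E}(n) = a_{f_E}(n+1) = \dots = a_{f_E}(n+i) = 0$, then taking $X = n - 1$ produces an $m$ with $n \le m < n + cn^{1/4}$ and $a_{f_E}(m) \neq 0$, whence $i + 1 \le m - n < cn^{1/4}$; that is exactly $i_{f_E}(n) \ll n^{1/4}$ for $n \gg 0$, with implied constant depending only on $N_E$.

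Next I would reduce this claim further. Write $n = \prod_i p_i^{e_i}\prod_j q_j^{2f_j}$, where by the two-squares theorem the $p_i \equiv 1 \pmod 4$ and the $q_j \equiv 3 \pmod 4$ are distinct odd primes, none dividing $N_E$ (as $\gcd(n, 2N_E) = 1$). Since $f_E$ is a normalized Hecke eigenform with rational integer coefficients, $a_{f_E}$ is multiplicative, so $a_{f_E}(n) = \prod_i a_{f_E}(p_i^{e_i})\prod_j a_{f_E}(q_j^{2f_j})$, and it is enough that each factor be non-zero. For $q = q_j$ (with $q \nmid N_E$): if $a_{f_E}(q) \neq 0$ then $a_{f_E}(q^r)\neq 0$ for all $r \ge 1$ by Lemma~\ref{krwlemma} (with $M_{f_E} = N_E$, legitimate since the nebentypus is trivial and the coefficients are rational integers); if $a_{f_E}(q) = 0$ then the weight-$2$ Hecke recursion gives $a_{f_E}(q^{r+1}) = -q\, a_{f_E}(q^{r-1})$, so $a_{f_E}(q^{2f}) = (-q)^f \neq 0$. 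Either way $a_{f_E}(q_j^{2f_j}) \neq 0$. Hence everything reduces to showing
\begin{equation*}
a_{f_E}(p) \neq 0 \qquad\text{for every prime } p \equiv 1 \pmod 4 \text{ with } p \nmid 2N_E,
\end{equation*}
since then Lemma~\ref{krwlemma} once more gives $a_{f_E}(p^{e}) \neq 0$ for all $e \ge 1$.

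This is the step where the hypothesis enters. A cyclic rational $4$-isogeny $E \to E'$ has kernel a $\Gal(\bar{\Q}/\Q)$-stable cyclic subgroup $C \subset E[4]$ of order $4$; as $\mathrm{Aut}(\Z/4\Z) \cong \{\pm 1\}$, the Galois action on $C$ is a quadratic character $\psi$, which by the criterion of N\'eron--Ogg--Shafarevich (good reduction away from $N_E$, and $4$ invertible away from $2$) is unramified at every $p \nmid 2N_E$. For such $p$ the characteristic polynomial of $\mathrm{Frob}_p$ on $E[4] \cong (\Z/4\Z)^2$ is $T^2 - a_{f_E}(p)T + p$ modulo $4$ (trace $\equiv a_{f_E}(p)$, determinant $\equiv p$ via the Weil pairing), and a generator of $C$ is an eigenvector with eigenvalue $\psi(\mathrm{Frob}_p) \in \{\pm 1\}$. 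Evaluating the characteristic polynomial on this eigenvector (Cayley--Hamilton holds over $\Z/4\Z$, and the resulting scalar must annihilate an element of order $4$) and using $\psi(\mathrm{Frob}_p)^2 = 1$ yields
\begin{equation*}
a_{f_E}(p) \equiv \psi(\mathrm{Frob}_p)\,(1+p) \pmod 4 .
\end{equation*}
For $p \equiv 1 \pmod 4$ the right-hand side is $\equiv 2 \pmod 4$, so $a_{f_E}(p) \not\equiv 0 \pmod 4$, in particular $a_{f_E}(p)\neq 0$, which is what was needed.

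I expect the only real content to be this last congruence; everything else is bookkeeping with Theorem~\ref{final-step}, Lemma~\ref{krwlemma} and multiplicativity. Two points deserve care. First, one must work with $\gcd(n, 2N_E) = 1$ rather than merely $\gcd(n, N_E) = 1$: this both guarantees $\psi$ is unramified at the primes in play and excludes $p = 2$, at which $a_{f_E}(2)$ can vanish even under good reduction (so $2 \mid n$ could not be controlled). Second, a rational $2$-isogeny by itself only forces $a_{f_E}(p)$ to be even, which does not preclude $a_{f_E}(p) = 0$; it is the passage to information modulo $4$ afforded by the \emph{cyclic} $4$-isogeny (with $\psi = 1$ in the special case of a rational $4$-torsion point, recovering~\cite{DG15}) that makes the argument go through.
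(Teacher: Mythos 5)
Your proposal is correct and follows essentially the same route as the paper: the mod-$4$ reducibility coming from the Galois-stable cyclic kernel yields $a_{f_E}(p)\equiv \pm(1+p)\pmod 4$, which kills vanishing at primes $p\equiv 1 \pmod 4$, and the rest is Lemma~\ref{krwlemma}, the Hecke recursion at primes $\equiv 3 \pmod 4$, multiplicativity, and Theorem~\ref{final-step}. The only cosmetic differences are that you derive the key congruence via Cayley--Hamilton on an eigenvector rather than reading off trace and determinant from the Borel (upper-triangular) form, and you treat primes $q\equiv 3\pmod 4$ by the exact relation $a_{f_E}(q^{2f})=(-q)^f$ when $a_{f_E}(q)=0$ instead of the paper's mod-$4$ computation $a_{f_E}(q^{2r})\equiv 1 \pmod 4$; both are valid.
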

\begin{proof}
The existence of a cyclic rational $4$-isogeny for $E$ implies that there exists an isogeny $\varphi: E \ra E^{\prime}$ defined over $\Q$ such that
$\mrm{ker}(\varphi)(\bar{\Q})$ is isomorphic to $\Z/4\Z$ and $\mrm{ker}(\varphi)$ is invariant under the $\Gal(\bar{\Q}/\Q)$-action. 
Therefore, the representation $\bar{\rho_4}$ is reducible and hence the image of $\bar{\rho_4}$ is inside the Borel subgroup of $\GL_2(\Z/4\Z)$,
and 
$$ \bar{\rho_4} \simeq \pmat{\chi}{*}{0}{*} \pmod 4. $$
For all primes $p \nmid 2N_E$, by calculating the trace and determinant of the representation at the Frobenius elements $\mrm{Frob}_p$ at $p$, 
we get the following congruence
$$a_E(p) \equiv \chi(p) + p \chi^{-1}(p) \pmod 4, $$
where $a_E(p)$ denotes the $p$-th Fourier coefficient of $f_E$. In fact, we obtain a relation that, for all $p \nmid 2N_E$ 
\begin{equation}
\label{twocases}
a_E(p) \equiv 1+p \pmod 4\ \mrm{or} \ a_E(p)\equiv -(1+p) \pmod 4 
\end{equation}
holds.

Now, we shall show that $a_E(n)$ is nonzero for all integers $n$ that are sums of two squares and are co-prime to $2N_E$.
If an integer $n$ is a sum of two squares then the prime factors of $n$ that are $\equiv 3 \pmod 4$ occur with an even exponent.
\begin{enumerate}
 \item If $p \equiv 1 \pmod 4$ and $p \nmid 2N_E$, then $a_E(p) \equiv 2 \pmod 4$ in either cases (cf.~\eqref{twocases}). 
       Therefore, $a_E(p)$ is non-zero if $p \equiv 1 \pmod 4$. 
       By Lemma~\ref{krwlemma}, we see that $a_E(p^m) \neq 0$ for $m \geq 1$, since $(p,N_E)=1$.
 \item If $p \equiv 3 \pmod 4$ and $p \nmid 2N_E$, then $a_E(p) \equiv 0 \pmod 4$.  The Hecke relations  
                         \begin{equation}
                         \label{EC Mod4}
                         a_E(p^r) = a_E(p)a_E(p^{r-1}) - p a_E(p^{r-2}),  
                         \end{equation}
       would imply that $a_E(p^{2r}) \equiv a_E(p^{2r-2}) \pmod 4$. Since $a_E(p^2) \equiv 1 \pmod 4$, we see that $a_E(p^{2r})$ is non-zero
       for all $r \geq 1$.
\end{enumerate}
By Theorem~\ref{final-step}, we can  obtain an integer  $n$ that is co-prime to $2N_E$ and is a sum of two squares in intervals
of type $(X, X+cX^{\frac{1}{4}})$, where $X \gg 0$ and $c>0$ depends only on $N_E$. Hence, we are done with the proof.
\end{proof}

Now, we shall show that Theorem~\ref{weight2theorem} holds for infinitely many non-isogenous elliptic curves $E$ over $\Q$.
To do this, let us recall some standard facts about modular curves.

If a field $k$ is of characteristic zero, then $k$-points of the curve $X_0(N)$ (away from the cusps) parametrize diagrams 
$(\varphi: E \ra E^{\prime})$ where $E, E^{\prime}$ are elliptic curves over $k$ and $\varphi: E \ra E^{\prime}$ is a $k$-rational isogeny with 
$\mrm{ker}(\varphi)$ over $\bar{k}$ is isomorphic to  a cyclic group of order $N$. 

For any prime $p$, let $X_0(p^2)$ be the classical modular curve over $\Q$ and $X^+_0(p^2)$ be its quotient
by the Atkin-Lehner involution. Let $X_{\mrm{split}}(p)$ be the modular curve defined over $\Q$ which corresponds
to the modular curve
$$\Gamma_{\mrm{split}}(p)= \left\{\psmat{a}{b}{c}{d} \in \mrm{SL}_2(\Z)|\ b \equiv c \equiv 0 \pmod p\ \mrm{or}\ a \equiv d \equiv 0 \pmod p \right\}, $$
i.e., $X_{\mrm{split}}(p) \otimes \C \simeq \Gamma_{\mrm{split}}(p)\backslash \mathbb{H} \cup \mathbb{P}^{1}(\Q)$.


\begin{prop}
\label{weight2infinite}
There exists infinitely many non-isogenous curves over $\Q$ which have a cyclic rational $4$-isogenies.
\end{prop}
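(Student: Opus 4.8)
The plan is to realise elliptic curves with a cyclic rational $4$-isogeny as rational points of the modular curve $X_0(4)$, and then to separate these curves into infinitely many isogeny classes by means of the $j$-invariant. First I would record that $X_0(4)$ has genus $0$ and a rational cusp, so $X_0(4)\cong\mbP^1_\Q$ and hence has infinitely many non-cuspidal $\Q$-rational points. By the moduli description recalled above, each such point represents a $\Q$-rational isogeny $\varphi\colon E\ra E'$ with $\mrm{ker}(\varphi)(\bar\Q)\simeq\Z/4\Z$ and $\mrm{ker}(\varphi)$ stable under $\Gal(\bar\Q/\Q)$, i.e.\ an elliptic curve $E/\Q$ equipped with a cyclic rational $4$-isogeny.

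To pass from these points to honest elliptic curves over $\Q$ (rather than merely to $\bar\Q$-isomorphism classes), it suffices to exhibit one explicit Weierstrass family $E_t$ over an affine open of $X_0(4)\cong\mbP^1_\Q$ carrying a $\Q(t)$-rational cyclic $4$-isogeny; such families are classical, and for instance one may pull back the universal curve from the fine moduli scheme $Y_1(4)$, thereby obtaining for each of infinitely many $t\in\Q$ an elliptic curve $E_t/\Q$ with a rational point $P_t$ of exact order $4$ and hence the cyclic rational $4$-isogeny $E_t\ra E_t/\lan P_t\ran$. (If one prefers curves strictly outside the range of~\cite{DG15}, one can instead quadratic-twist these by squarefree $d$ coprime to the conductor: in $E_t^{(d)}$ the image of $\lan P_t\ran$ is still a $\Gal(\bar\Q/\Q)$-stable cyclic subgroup of order $4$, but is no longer generated by a rational point.) Either way one obtains an infinite supply of elliptic curves over $\Q$ with a cyclic rational $4$-isogeny.

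It remains to pick out an infinite pairwise non-isogenous subfamily. The forgetful morphism $X_0(4)\ra X(1)\cong\mbP^1_j$ is non-constant, of degree $[\SL_2(\Z):\Gamma_0(4)]=6$, hence has finite fibres; so the $j$-invariants of the curves in the first family above take infinitely many distinct values. On the other hand, isogenous elliptic curves over $\Q$ have the same conductor, and by Shafarevich's theorem there are only finitely many elliptic curves over $\Q$ of a given conductor (alternatively, Kenku's theorem bounds a $\Q$-isogeny class by eight curves); thus each $\Q$-isogeny class is finite and in particular contains only finitely many distinct $j$-invariants. Consequently the curves built above lie in infinitely many distinct isogeny classes, and choosing one representative from each gives the desired family. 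The one genuinely delicate step is the first passage — from $\Q$-points of the coarse space $X_0(4)$ to actual elliptic curves over $\Q$ carrying the isogeny — which is exactly what exhibiting the explicit family (equivalently, the appeal to the fine moduli space $Y_1(4)$) is for; after that, the proof is just the degree computation for $X_0(4)\ra X(1)$ together with the finiteness of isogeny classes over $\Q$.
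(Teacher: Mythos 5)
Your argument is correct, but it is not the route the paper's proof actually takes. The paper argues via the isomorphism $X_0^+(p^2)\simeq X_{\mathrm{split}}(p)$ (quoting Bilu--Parent--Rebolledo) together with Momose's result that $X_{\mathrm{split}}(p)(\Q)\simeq \mbP^1(\Q)$ for $p\le 7$, specialised to $p=2$, and then finishes, as you do, with the finiteness of $\Q$-isogeny classes; your direct use of $X_0(4)\cong\mbP^1_\Q$ is only mentioned in the paper as an alternative in the remark following the proposition. Your version has some advantages: you address the coarse-moduli subtlety (passing from $\Q$-points of $X_0(4)$ to genuine curves over $\Q$ with a rational cyclic $4$-isogeny) by invoking an explicit family, equivalently the fine moduli scheme $Y_1(4)$ -- this is exactly the explicit family $E_t:y^2=x^3-(2t-1)x^2+t^2x$ with the $4$-torsion point $(t,t)$ that the paper writes down in the next proposition -- and you make the ``infinitely many $j$-invariants'' step precise via the degree-$6$ finite map $X_0(4)\ra X(1)$, whereas the paper simply asserts at the corresponding point that there are infinitely many non-isomorphic curves with a cyclic rational $4$-isogeny. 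The paper's route through $X_{\mathrm{split}}(2)$ is less elementary for this specific case but situates the statement in the general picture for $X_{\mathrm{split}}(p)$ (rational for $p\le 7$, finitely many rational points for $p=11$ or $p\ge 17$ by Mazur), which is the context the remark emphasizes. Your closing step (isogenous curves have equal conductor plus Shafarevich, or Kenku's bound of eight curves per class) is a harmless strengthening of the paper's appeal to the finiteness of isogeny classes over $\Q$.
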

\begin{proof}
For any prime $p$, the rational morphism of modular curves $X_0(p^2) \ra X_{\mathrm{split}}(p)$ 
induces an isomorphism of modular curves $X_0^+(p^2) \simeq X_{\mathrm{split}}(p)$(cf. the introduction of~\cite{BPR13}).  
If $p \leq 7$, we get that $X_{\mathrm{split}}(p)(\Q) \simeq \mathbb{P}^1(\Q)$(cf.~\cite[Page 115]{Mom84}).
When $p=2$, there are infinitely many non-isomorphic elliptic curves over $\Q$
which have a cyclic rational $4$-isogeny.
Now, the proposition follows from the fact that, given any elliptic curve $E$ over $\Q$, there are only finitely many elliptic curves over $\Q$
which are isogenous to $E$.
\end{proof}
\begin{remark}
In the proof of above proposition, one can also use the fact that $X_0(4)$ is a smooth algebraic curve of genus zero and hence is rational 
and that it has a rational point. Therefore, $X_0(4)$ has infinitely many rational points and hence so does $Y_0(4)$. 
On the contrary, in~\cite{Maz77}, Mazur showed that for each prime number $p =11$ or $p \geq 17$, 
there are only finitely many $\Q$-rational points on $X_{\mrm{split}}(p)$.
\end{remark}

In~\cite[Corollary 1]{DG15}, the authors have shown the existence of infinitely many non-isogenous elliptic curves over $\Q$ 
having a rational $4$-torsion point. As a consequence, they have produced infinitely many non-isogenous elliptic curves $E$ over $\Q$ with 
$i_{f_E} \ll n^{\frac{1}{4}}$ for all $n \gg 0$. We finish this section with a shorter proof of that result, 
by producing an explicit family of elliptic curves over $\Q$ having a rational $4$-torsion point.
\begin{prop}
For each $t \in \Q-  \{ 0, \frac{1}{4} \}$, the elliptic curve $E_t$ defined by 
$$E_t:y^2=x^3-(2t-1)x^2+t^2x$$
has a rational $4$-torsion point. In fact, there are infinitely many non-isogenous elliptic curves 
in the family $\{ E_t \}_{t \in \Q-  \{ 0, \frac{1}{4} \} }$.
\end{prop}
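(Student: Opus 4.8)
The plan is to exhibit the $4$-torsion point explicitly and then to extract the infinitude of isogeny classes from the $j$-invariant of the family.

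First I would rewrite $E_t$ in the form $y^2 = x\bigl(x^2 - (2t-1)x + t^2\bigr) = x^3 + (1-2t)x^2 + t^2 x$. From the first expression $T := (0,0)$ is visibly a rational point, and it has order $2$ because its $y$-coordinate vanishes. A short computation gives the discriminant $\Delta(E_t) = 16\,t^4(1-4t)$, which is non-zero precisely for $t \in \Q \setminus \{0,\frac{1}{4}\}$; this is exactly why those two values of $t$ are excluded from the family.

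Next I would check that $P := (t,t)$ lies on $E_t$ — substituting $x = t$ into the defining equation yields $t^3 - (2t-1)t^2 + t^3 = t^2 = y^2$ — and that $P$ has exact order $4$. For the latter I would use the duplication formula on $y^2 = x^3 + (1-2t)x^2 + t^2 x$: the tangent slope at $P$ is $\lambda = \dfrac{3t^2 + 2(1-2t)t + t^2}{2t} = 1$, so $x(2P) = \lambda^2 - (1-2t) - 2t = 0$, forcing $2P = (0,0) = T$. Since $y(P) = t \neq 0$, the point $P$ is not itself $2$-torsion, hence it has order exactly $4$. Thus every member of the family carries a rational $4$-torsion point.

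For the second assertion I would compute $c_4(E_t) = 16(t^2 - 4t + 1)$, so that
$$ j(E_t) = \frac{c_4^3}{\Delta(E_t)} = \frac{256\,(t^2 - 4t + 1)^3}{t^4(1-4t)}. $$
This is a non-constant rational function of $t$, so it takes infinitely many distinct values as $t$ runs over $\Q \setminus \{0,\frac{1}{4}\}$, and therefore the curves $E_t$ fall into infinitely many $\bar{\Q}$-isomorphism classes. Since each $\Q$-isogeny class contains only finitely many curves up to isomorphism (as recalled in the proof of Proposition~\ref{weight2infinite}), having only finitely many isogeny classes among the $E_t$ would force only finitely many $j$-invariants among them, a contradiction; hence the family must contain infinitely many pairwise non-isogenous curves. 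The whole argument is a computation with no serious obstacle; the only points needing care are verifying that $P = (t,t)$ has order exactly $4$ rather than $2$ (this is where $t \neq 0$ is used) and confirming that the discriminant computation recovers precisely the excluded set $\{0,\frac{1}{4}\}$.
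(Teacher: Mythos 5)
Your proof is correct and follows essentially the same route as the paper: exhibit $(t,t)$ as a rational $4$-torsion point, note that $j(E_t)$ is a non-constant rational function of $t$ so the family hits infinitely many $\bar{\Q}$-isomorphism classes, and conclude via the finiteness of each $\Q$-isogeny class. The only difference is that you carry out the verifications (duplication formula, discriminant, explicit $j$-invariant) that the paper leaves implicit, and these computations check out.
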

\begin{proof}
The point $(t,t)$ is a rational $4$-torsion point on $E_t$, therefore $E_t$ has a rational $4$-torsion point. 
Since the $j$-invariant of an elliptic curve  classifies elliptic curves over $\bar{\Q}$, up to an isomorphism,
we see that there are infinitely many elliptic curves in $\{ E_t \}$, which are not isomorphic to each other. 
This is because, given any element $\alpha \in \Q$, then $j(E_t)=\alpha$ holds only for finitely many $t$'s, since $j(E_t)$ is 
a rational polynomial in $\Q(t)$. Now, the last statement follows from the fact that, given any elliptic curve $E$ over $\Q$,
there are only finitely many elliptic curves which are isogenous to $E$.
\end{proof}
\begin{remark}
In fact, there is a two variable parametrization of elliptic curves over $\Q$ having $m$-torsion points, if $m \neq 1,2,3$. 
This can be found in the work of Lorenzini (cf.~\cite{Lor11} for more details).
\end{remark}

\section{Higher congruence between $f$ and $f_E$} 
In this section, we show that if there is higher congruence between $f$ and $f_E$ holds for the prime $2$, then~\eqref{basicidentity} holds for $f$ as well. 
In order to make these sense, let us define the notion of higher congruence between two eigenforms.

For $a \in \N$, a commutative ring $R$, and a formal power series
$$f = \sum_{n=0}^{\infty} c_n q^n \in R[[q]],$$
we define, for a prime ideal $\mfq$ of $R$,
$$\mrm{ord}_{\mfq^a} f = \mrm{inf}\{n \in \N \cup 0 \mid \mfq^a \nmid (c_n )\},$$ 
with the convention that $\mrm{ord}_{\mfq^a} f = \infty$ if $\mfq^a \mid (c_n)$\ for all $n$.
\begin{dfn}
We say that formal powers series $f_1$ and $f_2$ in $R[[q]]$ are congruent modulo $\mfq^a$,
if $\mrm{ord}_{\mfq^a}(f_1 - f_2) = \infty$, and we denote this by $f_1 \equiv f_2 \pmod {\mfq^a}.$
\end{dfn}

Let $f \in S_{2k}(\Gamma_0(N))$ be a cuspidal eigenform of weight $2k$ with $k>1$ and level $N$
with coefficient field $K$ and ring of integers $\mathcal{O}_K$. Let $N_f := \mrm{lcm}(N,M_f)$,
where $M_f$ is a natural number corresponding to $f$ as in Lemma~\ref{krwlemma}. Let $\mfq$ be a prime ideal of
$\mathcal{O}_K$ lying above $2$ and let $e(\mfq/2)$ denote the ramification index of $\mfq$.

Let $f_E$ be the cuspidal eigenform of level $N_E$ corresponding to an elliptic curve $E$ over $\Q$ of conductor $N_E$, which has a cyclic rational $4$-isogeny.

\begin{thm}
\label{maintheorem}
Let $f$ and $f_E$ be as above. If $f \equiv f_E \pmod {\mfq^m}$  for some $m >e(\mfq/2)$, 
then 
\begin{equation}
\label{localthmidentity}
i_f(n) \ll n^{\frac{1}{4}}, 
\end{equation}
for all $n \gg 0$ and the implied constant depends only on $N_fN_E$.
\end{thm}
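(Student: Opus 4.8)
The plan is to run the argument of Theorem~\ref{weight2theorem} with $a_f$ in place of $a_E$, using the congruence $f \equiv f_E \pmod{\mfq^m}$ to transport the $2$-adic information recorded in~\eqref{twocases} into $\mfq$-adic information about $a_f$. Write $e = e(\mfq/2)$, so that $v_{\mfq}(2) = e$; the hypothesis then says $v_{\mfq}(a_f(n) - a_E(n)) \geq m > e$ for every $n$ (with the convention $v_{\mfq}(0) = \infty$). Note also that an odd rational prime $p$ is a unit at $\mfq$, so $v_{\mfq}(p^{2k-1}) = 0$ irrespective of the weight $2k$. Throughout I would restrict to primes $p \nmid 2N_f N_E$, which in particular makes both~\eqref{twocases} and Lemma~\ref{krwlemma} (for $f$, since then $p \nmid M_f$) available.

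For $p \equiv 1 \pmod 4$, formula~\eqref{twocases} gives $a_E(p) \equiv 2 \pmod 4$, i.e.\ $v_{\mfq}(a_E(p)) = e$; since the discrepancy has valuation $>e$, the ultrametric inequality forces $v_{\mfq}(a_f(p)) = e$, hence $a_f(p)\neq 0$, and then Lemma~\ref{krwlemma} yields $a_f(p^r)\neq 0$ for all $r\geq 1$. For $p \equiv 3 \pmod 4$, one has $a_E(p)\equiv 0\pmod 4$, so $v_{\mfq}(a_f(p)) \geq s := \min(2e,m) \geq e+1$. Using the Hecke recursion $a_f(p^r) = a_f(p)a_f(p^{r-1}) - p^{2k-1}a_f(p^{r-2})$, I would prove by induction on $r$ that $v_{\mfq}(a_f(p^{2r})) = 0$ and $v_{\mfq}(a_f(p^{2r+1})) \geq s$: in the inductive step the term $a_f(p)a_f(p^{2r-1})$ has valuation $\geq 2s > 0$ while $p^{2k-1}a_f(p^{2r-2})$ has valuation $0$, which pins $v_{\mfq}(a_f(p^{2r})) = 0$, and the bound for the odd index then follows at once from the two controlled terms. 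In particular $a_f(p^{2r})\neq 0$ for all $r\geq 0$.

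To conclude, recall that $a_f$ is multiplicative and that any odd integer $n$ which is a sum of two squares factors as $\prod_i p_i^{a_i}\prod_j q_j^{2b_j}$ with $p_i \equiv 1$ and $q_j \equiv 3 \pmod 4$; by the two cases above, each factor $a_f(p_i^{a_i})$ and $a_f(q_j^{2b_j})$ is nonzero as soon as $n$ is coprime to $2N_f N_E$, so $a_f(n)\neq 0$ for every such $n$. Applying Theorem~\ref{final-step} with the integer $2N_f N_E$ produces such an $n$ in every interval $(X, X+cX^{1/4})$ with $X \gg 0$, and, exactly as at the end of the proof of Theorem~\ref{weight2theorem}, this gives~\eqref{localthmidentity} with implied constant depending only on $N_f N_E$.

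The step needing the most care is the valuation bookkeeping in the $p\equiv 3\pmod 4$ induction, together with making transparent that it is precisely the hypothesis $m > e(\mfq/2)$ — and not merely $\mfq \mid (a_f(n)-a_E(n))$ — which guarantees that the difference between $a_f$ and $a_E$ never reaches the valuation-$e$ term (for $p\equiv 1$) or the valuation-$0$ term (for $p\equiv 3$) that certifies non-vanishing. The weight $2k$ enters only through the $\mfq$-adic unit $p^{2k-1}$, which is exactly why the conclusion is insensitive to $k$.
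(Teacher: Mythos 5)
Your proposal is correct and follows essentially the same route as the paper: transfer the mod $4$ information of \eqref{twocases} to $a_f$ through the congruence $f \equiv f_E \pmod{\mfq^m}$, treat primes $p \equiv 1 \pmod 4$ via Lemma~\ref{krwlemma}, treat primes $p \equiv 3 \pmod 4$ via the Hecke recursion, and conclude with Theorem~\ref{final-step}. The only difference is cosmetic: your uniform valuation induction in the $p \equiv 3 \pmod 4$ case replaces the paper's split into the sub-cases $a_f(p) \equiv 0$ and $a_f(p) \not\equiv 0 \pmod{\mfq^m}$ (the latter handled there by Lemma~\ref{krwlemma}), a mild streamlining rather than a genuinely different argument.
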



\begin{proof}
Since $f \equiv f_E \pmod {\mfq^m}$, we see that $$a_f(n) \equiv a_{f_E}(n) \pmod {\mfq^m},$$ for all $n \geq 1$. 
By the modularity theorem, we know $a_{f_E}(n)=a_E(n)$ for all $n \geq 1$. Hence, $$ a_f(n) \equiv a_E(n) \pmod {\mfq^m}.$$

We show that $a_f(n)$ is non-zero for all integers $n$ that are sums of two squares and co-prime to $2N_fN_E$.
Observe that, if an odd integer $n$ is a sum of two squares then the prime factors of $n$ that are $\equiv 3 \pmod 4$ occur with an even exponent.
\begin{enumerate}
 \item If $p \equiv 1 \pmod 4$ and $p \nmid 2N_fN_E$, then $a_f(p) \equiv \pm (p+1) \pmod {\mfq^{\mrm{min}\{m,2e(\mfq/2)\}}}$, by~\eqref{twocases}.
       This implies that, the $a_f(p)$ is non-zero, because $m > e(\mfq/2)$. 
       By Lemma~\ref{krwlemma}, we see $a_f(p^m) \neq 0$ for $m \geq 1$, since $(p,N_f)=1$.
 \item If $p \equiv 3 \pmod 4$ and $p \nmid 2N_fN_E$, then $a_f(p) \equiv \pm (p+1) \pmod {\mfq^m}$, by~\eqref{twocases}.  
       \begin{itemize}
        \item If $a_f(p) \equiv 0 \pmod {\mfq^m}$, then by Hecke relations of Fourier coefficients of $f$, we see that 
       $$a_f(p^{2n}) \equiv (-p)^{n} \pmod {\mfq^m},$$ hence $a_f(p^{2n})$ is non-zero for all $n \geq 1$,
       because $p$ and norm of $\mfq$ are relatively prime.
       \item  If $a_f(p) \not \equiv 0 \pmod {\mfq^m}$, then 
       $a_f(p)$ is non-zero, and by Lemma~\ref{krwlemma}, we see $a_f(p^{2n}) \neq 0$ for $n \geq 1$, since $p \nmid N_f$.

       \end{itemize}

\end{enumerate}
By Theorem~\ref{final-step}, we can  obtain an integer  $n$ that is co-prime to $2N_fN_E$ and is a sum of two squares in intervals
of type $(X, X+cX^{\frac{1}{4}})$, where $X \gg 0$ and $c>0$ depends only on $N_fN_E$. Hence, we are done with the proof.
\end{proof}
As mentioned in the introduction, by using Theorem~\ref{maintheorem}, we wish to produce examples of cuspidal eigenform of level $N>1$ and weight $k>2$.
A special case of the above theorem is useful in producing examples, because in this case we explicitly know the integer $M_f$ and hence $N_f$.
\begin{cor}
Let $f$ be as in the theorem above and assume that the Fourier coefficients of $f$ are integers. 
If $f \equiv f_E \pmod 4$, then $$i_f(n) \ll n^{\frac{1}{4}},$$
for all $n \gg 0$ and the implied constant depends only on $NN_E$.
\end{cor}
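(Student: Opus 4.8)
The plan is to obtain this as an immediate specialization of Theorem~\ref{maintheorem} to the case of rational coefficients, so the work is entirely in checking that the abstract hypotheses collapse to the stated ones. First I would record that if the Fourier coefficients of $f$ lie in $\Z$, then the coefficient field is $K=\Q$, its ring of integers is $\mcO_K=\Z$, and the unique prime ideal above $2$ is $\mfq=(2)$, which is unramified; hence $e(\mfq/2)=1$.

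Next I would observe that the hypothesis $f\equiv f_E\pmod 4$ is precisely the congruence $f\equiv f_E\pmod{\mfq^m}$ in the sense of the definition above, with $m=2$. Since $2=m>e(\mfq/2)=1$, the hypothesis of Theorem~\ref{maintheorem} is met. Finally, because the nebentypus $\chi$ is trivial and $f$ has integer coefficients, Lemma~\ref{krwlemma} allows the choice $M_f=N$, so that $N_f=\mrm{lcm}(N,M_f)=N$. Feeding this into Theorem~\ref{maintheorem} yields $i_f(n)\ll n^{1/4}$ for all $n\gg 0$ with implied constant depending only on $N_fN_E=NN_E$, as claimed.

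There is no genuine obstacle here; the only thing to be careful about is the bookkeeping that turns the ramification-theoretic condition $m>e(\mfq/2)$ into the concrete condition ``congruent modulo $4$'' and that pins down $N_f=N$ explicitly. This is exactly the point of isolating the corollary: in the integer-coefficient setting both the congruence hypothesis and the constant $N_f$ are completely explicit, which is what makes the statement usable for exhibiting the non-CM (and CM) examples of weight $k>2$ and level $N>1$ promised in the introduction.
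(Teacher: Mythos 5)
Your proposal is correct and is exactly the specialization the paper intends: the corollary is stated without a separate proof precisely because, with integer coefficients, one has $\mcO_K=\Z$, $\mfq=(2)$, $e(\mfq/2)=1$, $m=2>1$, and $M_f=N$ by Lemma~\ref{krwlemma} (trivial character, integer coefficients), so $N_f=N$ and Theorem~\ref{maintheorem} gives the bound with constant depending only on $NN_E$. Your bookkeeping matches the paper's implicit argument in every detail.
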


In order to produce examples using the above corollary, one may need to check congruences for infinitely many coefficients.
In the case of mod $p$ congruences, this problem reduces to checking the congruences for only finitely many coefficients,
due to the work of Sturm. In our situation, to check the higher congruences between modular forms, one requires similar results. 
Thanks the work of Rasmussen, we do have similar results with appropriate bounds, which we refer to them as Sturm's bound.

The following proposition is a particular case of~\cite[Prop. 2.8]{Ras09} and we state his result only when $p=2$ with $m=2e(\mfq/2)$ and when $12$ divides $N$.
\begin{prop}
\label{Sturm_general}
Let $f_1$ and $f_2$ be modular forms of weights $2k_1$ and $2k_2$ of level $N$ and with coefficients in $\mcO_K$.
Let $\mfq$ be a prime ideal of $\mcO_K$ above $2$.
If $a_n(f_1) \equiv a_n(f_2) \pmod {\mfq^m}$ for all $n \leq B:=\mrm{max}\{2k_1,2k_2\} [\SL_2(\Z):\Gamma_1(N)]/12$, we have
$$f_1 \equiv f_2 \pmod {\mfq^m}.$$
\end{prop}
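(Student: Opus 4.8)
The plan is to reduce the statement to the classical Sturm bound modulo a prime in two steps: first homogenize $f_1$ and $f_2$ to a common weight, then descend from a congruence modulo $\mfq^m$ to one modulo $\mfq$.

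\textbf{Step 1 (homogenization).} Assume without loss of generality that $2k_1 \geq 2k_2$ and put $r = k_1 - k_2 \geq 0$, so that $B = 2k_1[\SL_2(\Z):\Gamma_1(N)]/12$ is exactly the weight-$2k_1$, level-$\Gamma_1(N)$ Sturm bound. If $r = 0$ there is nothing to homogenize; suppose $r \geq 1$. I want an auxiliary modular form $G$ of weight $2r$ and level $N$ with $\mcO_K$-coefficients satisfying $G \equiv 1 \pmod{\mfq^m}$. Since $m = 2e(\mfq/2) = v_{\mfq}(4)$, it suffices to produce such a $G$ with $G \equiv 1 \pmod 4$ in $\Z[[q]]$. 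For this take $g_N := E_2(q) - N E_2(q^N)$, the classical holomorphic weight-$2$ modular form on $\Gamma_0(N)$ (this is the point where $4 \mid N$, hence $12 \mid N$, is used: its constant term is $1-N \equiv 1$, and every other coefficient is a multiple of $24$), so $g_N = 1 + 4\phi$ with $\phi \in \Z[[q]]$, and set $G := g_N^{\,r}$, of weight $2r$, for which $G = (1+4\phi)^r \equiv 1 \pmod 4$. Then $f_2 G$ is a modular form of weight $2k_2 + 2r = 2k_1$ and level $N$ with $a_n(f_2 G) \equiv a_n(f_2) \pmod{\mfq^m}$ for all $n$.

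\textbf{Step 2 (the $\mfq$-adic Sturm bound).} Put $h := f_1 - f_2 G$ (and $h := f_1 - f_2$ if $r = 0$), a modular form of weight $2k_1$ and level $N$ with coefficients in $\mcO_K$. By hypothesis and Step 1, $a_n(h) \equiv a_n(f_1) - a_n(f_2) \equiv 0 \pmod{\mfq^m}$ for every $n \leq B$. I claim this forces $h \equiv 0 \pmod{\mfq^m}$; granting the claim, $f_1 \equiv f_2 G \equiv f_2 \pmod{\mfq^m}$ and we are done. To prove the claim, localize so that the coefficients lie in the discrete valuation ring $\mcO_{K,\mfq}$ with uniformizer $\pi$ (the case $h = 0$ is trivial, so assume $h \neq 0$). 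If $h \not\equiv 0 \pmod{\mfq^m}$, let $a = \min_n v_{\mfq}(a_n(h)) < m$ be the largest power of $\mfq$ dividing all coefficients of $h$; then $h' := \pi^{-a} h$ is again a modular form of weight $2k_1$ and level $N$, now with $h' \not\equiv 0 \pmod{\mfq}$. The classical Sturm bound for a single weight, applied to $h'$, yields some $n \leq B$ with $v_{\mfq}(a_n(h')) = 0$, i.e. $v_{\mfq}(a_n(h)) = a < m$, so $a_n(h) \not\equiv 0 \pmod{\mfq^m}$ with $n \leq B$ — contradicting the vanishing just established. Hence $h \equiv 0 \pmod{\mfq^m}$.

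\textbf{Where the work is.} The only ingredient that is not purely formal is the construction in Step 1 of the weight-$2r$ form $G \equiv 1 \pmod{\mfq^m}$ at level $N$; this is precisely where the divisibility hypothesis on $N$ enters (via $g_N$ for the weight-$2$ building block), and the restriction $m = 2e(\mfq/2)$ — i.e. working only modulo $4$ — is exactly what makes the choice $G = g_N^{\,r}$ available with no further effort. The rest — multiplying to equalize weights, and dividing out the largest power of $\mfq$ to pass to the mod-$\mfq$ Sturm bound — is routine, so I expect the proof to be short once Step 1 is in hand. (For weights $2r \geq 4$ one could alternatively homogenize with the level-$1$ forms $E_4, E_6$, since both are $\equiv 1 \pmod 4$ and their monomials realize every even weight $\geq 4$; only the weight-$2$ case genuinely requires the level.)
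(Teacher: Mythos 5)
Your argument is correct. Note, though, that the paper itself gives no proof of this proposition: it is quoted as a special case of Rasmussen's thesis \cite[Prop.\ 2.8]{Ras09}, so there is no in-paper argument to compare against; what you have written is essentially a self-contained reconstruction of the standard proof behind that result. Your two steps are exactly the usual strategy: first equalize the weights by multiplying the lower-weight form by a level-$N$ form congruent to $1$ modulo the relevant power of $\mfq$ (your explicit choice $E_2(q)-NE_2(q^N)$, congruent to $1 \bmod 4$ because $4\mid N$, is a clean way to get the weight-$2$ building block, and this is precisely where the side hypotheses ``$12\mid N$'' and ``$m=2e(\mfq/2)$'' from the surrounding text enter — in fact your proof works for any $m\le 2e(\mfq/2)$, since all you need is $(4)\subseteq\mfq^m$); second, pass from mod $\mfq^m$ to mod $\mfq$ by dividing the difference by the maximal power of a uniformizer and invoking the classical Sturm bound for weight $2\max\{k_1,k_2\}$ on $\Gamma_1(N)$, which is exactly the bound $B$ in the statement. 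Two tiny points: the phrase ``$4\mid N$, hence $12\mid N$'' has the implication backwards (it is $12\mid N$ that gives $4\mid N$, which is all you use), and in Step 2 one should either state Sturm's theorem over the local ring $\mcO_{K,\mfq}$ (where it holds, since only the $\mfq$-valuation is involved) or clear the finitely many bounded denominators introduced by $\pi^{-a}$ by a $\mfq$-unit of $\mcO_K$; both are routine and do not affect correctness.
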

\begin{remark}
We refer to the number $B$ in the above proposition as the Sturm's bound. It only depends on the level and weights of 
the modular forms involved, but  not on the dimension of the space where the modular forms belong to.	
\end{remark}

In the following examples, we apply Proposition~\ref{Sturm_general} with $m=2$, and $\mfq=(2)$. Clearly, we have $e(\mfq/2)=1 < m=2 $.
Recall that, the Cremona label of an elliptic curve $E$ over $\Q$ is a way of indexing the elliptic curves over $\Q$. 
The first number represents the conductor of $E$, the letter(s) followed by represent the isogeny class of $E$ and 
the last number represents the isomorphism class within the isogeny class of $E$ as it appears in Cremona's tables.
\begin{ex}
Let $E$ be the elliptic curves over $\Q$ defined by $y^2=x^3-x^2-64x+220$ of conductor $24$ and  has  a cyclic rational $4$-isogeny. 
Cremona label for $E$ is $24a3$. Let $f_E$ denote the cuspidal non-CM eigenform of level $24$ corresponding to $E$. 
The dimension of $S^{\mrm{new}}_2(24)$ is $1$ and the $q$-expansion of $f_E$ is given by
\begin{equation}
\label{wt2}
f_E= q - q^3 -2q^5 +q^9 +4q^{11} - 2q^{13} +2q^{15} +2q^{17} - 4q^{19}  + O(q^{20}). 
\end{equation}
\end{ex}

\begin{ex}
The dimension of $S^{\mrm{new}}_4(24)$ is $1$ and generated by $f_{24,4}$. The $q$-expansion of $f_{24,4}$ is given by 
\begin{equation}
\label{wt4} 
f_{24,4} = q + 3q^3 + 14q^5 - 24q^7 + 9q^9 - 28q^{11} -74q^{13} +42 q^{15} +82 q^{17} + 92q^{19} + O(q^{20}).
\end{equation}
\end{ex}

\begin{ex}
The dimension of the $S^{\mrm{new}}_{10}(12)$ is $1$ and generated by $f_{12,10}$.  The $q$-expansion of $f_{12,10}$ is given by
\begin{equation}
\label{wt10}
f_{12,10}= q -  81 q^3 + 990 q^5 + 8576 q^7 + 6561 q^9 + 70596 q^{11} -   2530 q^{13} + O(q^{14})
\end{equation}
\end{ex}



\begin{thm}
The cuspidal eigenforms $f$ in~\eqref{wt2},~\eqref{wt4},~\eqref{wt10}  
are without complex multiplication and satisfy 
\begin{equation}
\label{localidentity}
i_f(n) \ll n^{\frac{1}{4}}  
\end{equation}
for $n \gg 0$ and the implied constant depends on levels.
\end{thm}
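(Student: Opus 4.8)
The plan is to handle the three eigenforms one at a time. The form in~\eqref{wt2} is exactly $f_E$ for the elliptic curve $E=24a3$ of conductor $N_E=24$; as recorded in the corresponding example, $E$ has a cyclic rational $4$-isogeny and $f_E$ is non-CM, so Theorem~\ref{weight2theorem} applies directly and yields $i_{f_E}(n)\ll n^{1/4}$ for $n\gg 0$ with implied constant depending only on $24$. For the forms in~\eqref{wt4} and~\eqref{wt10} I would keep this same $f_E$ and establish the mod $4$ congruences
\begin{equation*}
f_{24,4}\equiv f_E\pmod 4,\qquad f_{12,10}\equiv f_E\pmod 4,
\end{equation*}
after which the corollary to Theorem~\ref{maintheorem} (note that with $\mfq=(2)$ and $m=2$ one has $e(\mfq/2)=1<2$, so the hypothesis $m>e(\mfq/2)$ holds and the Fourier coefficients are rational integers) immediately gives~\eqref{localidentity} for $f_{24,4}$ and for $f_{12,10}$, with implied constants depending on $24\cdot24$ and $12\cdot24$ respectively.

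To prove the two congruences I would use the Sturm-type bound of Proposition~\ref{Sturm_general} with $\mfq=(2)$ and $m=2$, whose hypotheses are met since $12\mid 24$. For $f_{24,4}$ both forms are of level $24$, so it suffices to verify $a_n(f_{24,4})\equiv a_n(f_E)\pmod 4$ for all $n\le B=4\,[\SL_2(\Z):\Gamma_1(24)]/12=128$. For $f_{12,10}$, which has level $12$, I would first regard it as an oldform of level $24$ via the degeneracy map $q\mapsto q$ (so its $q$-expansion is unchanged, hence the resulting congruence transports back to $f_{12,10}$ itself), and then apply Proposition~\ref{Sturm_general} at level $24$ in weights $10$ and $2$, reducing the claim to checking $a_n(f_{12,10})\equiv a_n(f_E)\pmod 4$ for $n\le B=10\,[\SL_2(\Z):\Gamma_1(24)]/12=320$. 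Since all the forms are Hecke eigenforms with rational integer coefficients, these are finite computations (further reducible to prime powers below $B$), and the printed initial $q$-expansions already display the pattern $a_n(f)\equiv a_n(f_E)\pmod 4$ with $a_p(f_E)\equiv\pm(p+1)\pmod 4$, as in~\eqref{twocases}.

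Finally I would check that $f_{24,4}$ and $f_{12,10}$ are non-CM. A CM newform of level dividing $24$ (respectively $12$) has complex multiplication by an imaginary quadratic field whose discriminant divides the level, hence by one of $\Q(i),\Q(\sqrt{-2}),\Q(\sqrt{-3}),\Q(\sqrt{-6})$ (respectively one of $\Q(i),\Q(\sqrt{-3})$). Now $a_3(f_{24,4})=3\ne 0$ and $a_3(f_{12,10})=-81\ne 0$ rule out CM by $\Q(i)$; $a_5(f_{24,4})=14\ne 0$ and $a_5(f_{12,10})=990\ne 0$ rule out $\Q(\sqrt{-2})$ and $\Q(\sqrt{-3})$; and $a_{13}(f_{24,4})=-74\ne 0$ rules out $\Q(\sqrt{-6})$. (Likewise $E=24a3$ is not a CM curve, since $24$ is not the conductor of a CM elliptic curve over $\Q$.)

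The only real difficulty is computational and bookkeeping: one must carry the coefficient comparisons out to the Sturm bounds (a few hundred coefficients, well beyond the $q$-expansions printed in the examples), and in the weight $10$ case one must be careful to apply Proposition~\ref{Sturm_general} at the common level $24$ rather than at the level $12$ of $f_{12,10}$ — where there are no weight $2$ cusp forms to compare against — and to use the $q$-expansion-preserving degeneracy map so that the established congruence actually descends to $f_{12,10}$ itself.
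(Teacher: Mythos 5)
Your proposal is correct and follows essentially the same route as the paper: apply Theorem~\ref{weight2theorem} to $f_E$, verify the mod~$4$ congruences $f_{24,4}\equiv f_E$ and $f_{12,10}\equiv f_E$ up to the Sturm bounds of Proposition~\ref{Sturm_general} (viewing $f_{12,10}$ at level $24$, bounds $128$ and $320$), and then invoke Theorem~\ref{maintheorem} with $\mfq=(2)$, $m=2>e(\mfq/2)=1$. The only deviation is cosmetic: you rule out CM by a direct inert-prime argument using the printed coefficients, whereas the paper simply cites the tables of~\cite{Tsa14}, and like the paper you ultimately rely on a finite machine verification of the congruences up to the Sturm bound.
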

\begin{proof}
From the tables in~\cite{Tsa14}, we see that all the cuspidal Hecke eigenforms in~\eqref{wt2},~\eqref{wt4},~\eqref{wt10} are without
complex multiplication. By Proposition~\ref{Sturm_general}, it is enough to check the congruences between the eigenforms in~\eqref{wt2},~\eqref{wt4},~\eqref{wt10}
for all coefficients up to the corresponding Sturm's bound.
\begin{enumerate}
 \item By Theorem~\ref{weight2theorem}, the cuspidal eigenform $f_E$ satisfies~\eqref{localidentity}.
 \item For the eigenform $f_{24,4}$, if we show that $f_E$ and $f_{24,4}$ are congruent modulo $4$, then the claim follows from Theorem~\ref{maintheorem}.
       By Proposition~\ref{Sturm_general}, it is enough to check these congruence for all coefficients up to $128$.  
       Using SAGE, we have checked that these indeed hold and hence the cuspidal eigenform $f_{24,4}$ satisfies~\eqref{localidentity}.
 \item For the eigenform $f_{12,10}$, if we show that $f_E$ and $f_{12,10}$ are congruent modulo $4$, then the claim follows from Theorem~\ref{maintheorem}.
       To check these congruences between $f_{12,10}$ and $f_E$, we treat the cuspidal eigenform $f_{12,10}$ as a cusp form in $S_{10}(24)$ 
       and apply the Proposition~\ref{Sturm_general}. Now, by Proposition~\ref{Sturm_general}, it is enough to check the congruence for all coefficients up to $320$.  
       Using SAGE, we have checked that these indeed hold  and hence the cuspidal eigenform $f_{12,10}$ satisfies~\eqref{localidentity}.     
\end{enumerate}
\end{proof}

We remark that Theorem~\ref{maintheorem} works for any cuspidal Hecke eigenform $f$, i.e.,
there is no assumption on the Hecke eigenform $f$ being CM or non-CM.
However, the results of Balog and Ono (\cite{BO01}), Kowalski, Robert, and Wu (\cite{KRW07}) were proved for Hecke eigenforms 
without CM. So, we chose to construct  cuspidal Hecke eigenforms of higher weight without CM to show that there are 
examples with an improved bound for $i_f(n)$. Also, note that all the eigenforms in~\cite{DG14} are also without CM, because
they are of level $1$.

Now we shall  give an example of a Hecke eigenform $f$ of weight $k>2$ and level $N>1$ with CM and 
satisfy the condition that $i_f(n) \ll n^{\frac{1}{4}}$.

\begin{ex}
Let $E$ be the elliptic curves over $\Q$ defined by $y^2=x^3-11x+14$ of conductor $32$ and  has  a cyclic rational $4$-isogeny. 
Cremona label for $E$ is $32a4$. Let $f_E$ denote the cuspidal CM eigenform of level $32$ corresponding to $E$. 
The dimension of $S^{\mrm{new}}_2(32)$ is $1$ and the $q$-expansion of $f_E$ is given by
\begin{equation}
\label{wt2cm}
f_E= q-2q^5-3q^9+6q^{13}+2q^{17}+ O(q^{20}). 
\end{equation}
\end{ex}

\begin{ex}
\label{wt4cm} 
The dimension of $S^{\mrm{new}}_4(32)$ is $3$ and denote the basis of eigenforms with $f^{(1)}_{32,4},f^{(2)}_{32,4},f^{(3)}_{32,4}$, respectively. 
For example, the $q$-expansion of $f^{(3)}_{32,4}$ is given by
\begin{equation}
f^{(3)}_{32,4} = q + 8q^3 -10q^5 +16q^7 + 37q^9 - 40q^{11} -50q^{13} -80 q^{15} -30 q^{17} + 40q^{19} + O(q^{20}).
\end{equation}
\end{ex}

\begin{thm}
The cuspidal eigenform $f_E$ in~\eqref{wt2cm} and one of the $3$ eigenforms in Example~\ref{wt4cm} 
are with CM and satisfy 
\begin{equation}
\label{localidentitycm}
i_f(n) \ll n^{\frac{1}{4}}  
\end{equation}
for $n \gg 0$ and the implied constant depends on the levels.
\end{thm}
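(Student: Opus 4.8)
The plan is to reprise, now in the CM setting, the argument already used for the non-CM higher-weight examples: take $E$ to be the curve $32a4$ of \eqref{wt2cm}, let $g$ denote the CM newform in $S_4^{\mathrm{new}}(32)$, and show that $g\equiv f_E\pmod 4$, so that the Corollary to Theorem~\ref{maintheorem} applies. First I would settle the CM assertions. The curve $E:y^2=x^3-11x+14$ has $j$-invariant $287496=66^3$, which is the $j$-invariant of the order $\Z[2i]$ of discriminant $-16$ in $\Q(i)$; hence $E$, and therefore $f_E$ in \eqref{wt2cm}, has CM by $\Q(i)$ (this is also recorded in~\cite{Tsa14}). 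Among $f^{(1)}_{32,4},f^{(2)}_{32,4},f^{(3)}_{32,4}$ — all of which have integer Fourier coefficients by the tables of~\cite{Tsa14} — exactly one is a CM form; since the only imaginary quadratic fields unramified outside $2$ are $\Q(i)$ and $\Q(\sqrt{-2})$, while $a_3(f^{(3)}_{32,4})=8\neq 0$ excludes CM by $\Q(i)$ and $a_5(f^{(3)}_{32,4})=-10\neq 0$ excludes CM by $\Q(\sqrt{-2})$, the CM form $g$ is $f^{(1)}_{32,4}$ or $f^{(2)}_{32,4}$, with CM by $\Q(i)$; this gives both CM assertions.

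The easy half is immediate: $E$ has a cyclic rational $4$-isogeny, so Theorem~\ref{weight2theorem} gives $i_{f_E}(n)\ll n^{1/4}$ for $n\gg 0$ with implied constant depending only on $N_E=32$, which is \eqref{localidentitycm} for $f_E$. For $g$, since its Fourier coefficients are integers it suffices, by the Corollary to Theorem~\ref{maintheorem}, to prove $g\equiv f_E\pmod 4$. To verify this via Proposition~\ref{Sturm_general}, whose hypothesis demands $12\mid N$, I would view both $f_E$ and $g$ as oldforms of level $96=\mathrm{lcm}(32,12)$, which is harmless since the old-form embedding preserves $q$-expansions. With $[\SL_2(\Z):\Gamma_1(96)]=6144$, the Sturm bound is $B=\max\{2,4\}\cdot 6144/12=2048$, so a finite SAGE computation checking $a_n(f_E)\equiv a_n(g)\pmod 4$ for $1\le n\le 2048$ establishes $g\equiv f_E\pmod 4$; the Corollary then yields $i_g(n)\ll n^{1/4}$ for $n\gg 0$ with implied constant depending only on the levels, completing the proof.

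The main obstacle is just the size of that verification: because $12\nmid 32$ one is forced up to level $96$, which roughly quadruples the Sturm bound compared with the conductor-$24$ examples, so the SAGE check is heavier, though still entirely routine. A secondary point is to make sure it is the CM weight-$4$ form, rather than one of the non-CM newforms in $S_4^{\mathrm{new}}(32)$, that is congruent to $f_E$ mod $4$; the computation settles this, but it can also be anticipated conceptually: if $g$ is the CM form attached to the cube $\psi^3$ of the Hecke character $\psi$ of $\Q(i)$ underlying $f_E$, then $a_g(p)=a_{f_E}(p)\bigl(a_{f_E}(p)^2-3p\bigr)$ for $p\nmid 2$, and since $a_{f_E}(p)\equiv\pm(1+p)\pmod 4$ by \eqref{twocases} — so $a_{f_E}(p)\equiv 0\pmod 4$ when $p\equiv 3\pmod 4$ and $a_{f_E}(p)\equiv 2\pmod 4$ when $p\equiv 1\pmod 4$ — one checks prime by prime that $a_g(p)\equiv a_{f_E}(p)\pmod 4$; using $p^3\equiv p\pmod 4$ the Hecke recursions then carry this to all prime powers, and, since at $p=2$ both forms have $a_2=0$, multiplicativity propagates the congruence to every $n$.
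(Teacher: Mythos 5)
Your proposal is correct and follows the same overall strategy as the paper: $i_{f_E}(n)\ll n^{1/4}$ comes from Theorem~\ref{weight2theorem}, and for the weight-$4$ form one verifies a congruence modulo $4$ with $f_E$ up to a Sturm bound (Proposition~\ref{Sturm_general}) and then invokes Theorem~\ref{maintheorem}, with the CM/non-CM identifications read off from tables (or, in your case, also from $j(E)=66^3$ and the vanishing pattern of coefficients). The differences are in execution. The paper checks by SAGE that \emph{all three} newforms in $S_4^{\mrm{new}}(32)$ are congruent to $f_E$ modulo $4$, working at level $32$ with Sturm bound $256$; since $12\nmid 32$, that does not literally meet the hypothesis of Proposition~\ref{Sturm_general} as stated, and your passage to level $96$ with bound $2048$ repairs this at the cost of a larger but still routine computation, exactly parallel to the paper's own treatment of $f_{12,10}$ viewed in $S_{10}(24)$. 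You verify the congruence only for the CM form, which indeed suffices for the statement as phrased. Finally, your Hecke-character identity $a_g(p)=a_{f_E}(p)^3-3p\,a_{f_E}(p)$, combined with \eqref{twocases}, $p^3\equiv p\pmod 4$, and $a_2=0$ for both forms, yields a computation-free proof of $g\equiv f_E\pmod 4$ that the paper does not contain; it does presuppose that the CM newform in $S_4^{\mrm{new}}(32)$ is the form attached to $\psi^3$, where $\psi$ is the Hecke character of $\Q(i)$ underlying $f_E$, so if you rely on that argument rather than the SAGE check you should state and justify this identification explicitly.
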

\begin{proof}
From the tables in~\cite{Tsa14}, we see that the cuspidal Hecke eigenform $f_E$ in~\eqref{wt2cm} has CM.
By Theorem~\ref{weight2theorem}, the cuspidal eigenform $f_E$ satisfies~\eqref{localidentitycm}.

Interestingly, all the $3$ cuspidal eigenforms in  $S_{4}^{\mrm{new}}(32)$ and the elliptic curve $f_E$ are congruent modulo $4$. 
By Proposition~\ref{Sturm_general}, it is enough to check these congruences up to the corresponding Sturm's bound, which is $256$ in this case. 
Using SAGE, we have checked that these indeed hold and hence all these $3$ cuspidal eigenforms in  $S_{4}^{\mrm{new}}(32)$ satisfies 
the identity~\eqref{localidentitycm}, by Theorem~\ref{maintheorem}.
From the tables in~\cite{Tsa14}, we see that, out of these $3$ eigenforms $f^{(1)}_{32,4},f^{(2)}_{32,4},f^{(3)}_{32,4}$, 
$2$ are without CM and $1$ is with CM. 
\end{proof}
Hence, there is an example of a Hecke eigenform $f$ of weight $k>2$ and level $N>1$ with CM and 
satisfy the condition that $i_f(n) \ll n^{\frac{1}{4}}$.



\section{$f$ and $f_E$ are $2$-adically close}
In this section, we improvise the result in the previous section and prove that if a modular cuspidal eigenform $f$ is $2$-adically close to an elliptic curve 
$E/\Q$, which has a cyclic rational $4$-isogeny, then~\eqref{basicidentity} holds for $f$ as well.  We also produce an example where this improvised version of the result is needed.

Define a function $\alpha:\Z \ra \Z$ as follows:
\begin{equation*}
\alpha(n) = \left\{
      \begin{array}{rl}
          0 & \text{if } n \leq 1,\\
           1 & \text{if } n = 2,\\
           n-2 & \text{if } n > 2.
\end{array} \right.
\end{equation*}
Now, we shall introduce the notion of $2$-adically close.
\begin{dfn}
Let $k_1, k_2$ be positive integers such that $2k_1 \equiv 2k_2 \pmod {2^s}$ for some integer $s \geq 1$. For $i=1,2$, suppose $f_i$ is a cuspidal eigenform 
on $\Gamma_0(N_i)$ of level $N_i$ and weight $k_i$ with coefficients in $\mcO_K$.

We say that $f_1$ and $f_2$ are $2$-adically close, if there exists a prime ideal $\mfq$ over $2$ in $\mcO_K$ with ramification index $e(\mfq/2)$
and an integer $m$ with $s \geq \alpha(\lceil \frac{m}{e(\mfp)/2} \rceil) \geq 1 $ such that
\begin{equation*}
a_{f_1}(p) \equiv a_{f_2}(p) \pmod {\mfq^m},
\end{equation*}
for all primes $p \nmid 2N_1N_2$.
\end{dfn}
\begin{remark}
\label{2adicremark}
The condition $\alpha\left(\lceil m/e(\mfq/2) \rceil \right) \geq 1$ implies that $m > e(\mfq/2)$.
\end{remark}

Let $f \in S_{2k}(\Gamma_0(N))$ be a cuspidal eigenform of level $N>1$ and weight $2k$ with $k>1$ with coefficients in the ring of 
integers $\mathcal{O}_K$. Let $N_f := \mrm{lcm}(N,M_f)$,
where $M_f$ is a natural number corresponding to $f$ as in Lemma~\ref{krwlemma}.
Let $\mfq$ be a prime ideal of
$\mathcal{O}_K$ lying above $2$ and let $e(\mfq/2)$ denote the ramification index of $\mfq$. 

Let $f_E$ be a cuspidal eigenform of level $N_E$ corresponding to an elliptic curve $E$ over $\Q$ of conductor $N_E$, which has a cyclic rational $4$-isogeny.


\begin{thm}
\label{maintheorem-2}
Let $f$ and $f_E$ be as above.  If $f$ and $f_E$ are $2$-adically close, then 
\begin{equation}
\label{localrefinedidentity}
i_f(n) \ll n^{\frac{1}{4}}, 
\end{equation}
for all $n \gg 0$ and the implied constant depends only on $N_fN_E$.
\end{thm}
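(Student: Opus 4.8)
The plan is to reduce this to Theorem~\ref{maintheorem}: I will upgrade the single-prime congruence furnished by the hypothesis to a congruence at \emph{all} integers coprime to the relevant levels, and then invoke that theorem. Write $e=e(\mfq/2)$ and let $2k$ be the weight of $f$. By the definition of $2$-adic closeness we are handed: $a_f(p)\equiv a_E(p)\pmod{\mfq^m}$ for every prime $p\nmid 2NN_E$; a congruence between the weights $2k$ and $2$ modulo a power of $2$, which forces a lower bound on $v_2(k-1)$; and the inequality $s\geq\alpha(\lceil m/e\rceil)\geq 1$. By Remark~\ref{2adicremark} the last condition already yields $m>e$, and since $\alpha(r)=\max\{1,r-2\}$ for $r\geq 2$ it also bounds $\lceil m/e\rceil$ above in terms of $s$.

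The crux -- the one genuinely new point over Theorem~\ref{maintheorem} -- is the claim that the weight discrepancy is invisible modulo $\mfq^m$, i.e.\ $p^{2k-1}\equiv p\pmod{\mfq^m}$ for every odd prime $p$. I would prove this by the lifting-the-exponent lemma at $2$: from $p^{2k-1}-p=p(p^{2(k-1)}-1)$ and the fact that one of $p\pm1$ is divisible by $4$ while the other is divisible by exactly $2$,
\[
v_2\bigl(p^{2(k-1)}-1\bigr)=v_2(p-1)+v_2(p+1)+v_2(k-1)\ \geq\ 3+v_2(k-1).
\]
The weight-closeness condition makes $v_2(k-1)$ large enough, and $s\geq\alpha(\lceil m/e\rceil)$ makes $\lceil m/e\rceil$ small enough, that $v_2(p^{2k-1}-p)\geq\lceil m/e\rceil$, so that $\mrm{ord}_\mfq(p^{2k-1}-p)=e\cdot v_2(p^{2k-1}-p)\geq m$. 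This is exactly the book-keeping the function $\alpha$ is designed to make work -- the ``$-2$'' in $\alpha(r)=r-2$ reflecting the estimate $v_2(p-1)+v_2(p+1)\geq 3$ -- and it is the step I expect to require the most care.

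With this in hand, I would run the Hecke recursions of $f$ and $f_E$ in parallel. Fix a prime $p\nmid 2NN_E$; then $a_E(p^{r+1})=a_E(p)a_E(p^r)-p\,a_E(p^{r-1})$ while $a_f(p^{r+1})=a_f(p)a_f(p^r)-p^{2k-1}a_f(p^{r-1})$. An induction on $r$, with base cases $a_f(p^0)=a_E(p^0)=1$ and $a_f(p)\equiv a_E(p)\pmod{\mfq^m}$, using the previous paragraph to replace $p^{2k-1}$ by $p$ modulo $\mfq^m$, yields $a_f(p^r)\equiv a_E(p^r)\pmod{\mfq^m}$ for all $r\geq 0$. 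Multiplicativity of Fourier coefficients at coprime arguments then gives $a_f(n)\equiv a_E(n)\pmod{\mfq^m}$ for every $n$ with $(n,2NN_E)=1$; since $N\mid N_f$, this is precisely the congruence used in the proof of Theorem~\ref{maintheorem}, whose argument only inspects coefficients at integers coprime to $2N_fN_E$.

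It then remains to repeat that argument. By Theorem~\ref{final-step}, for all $X\gg 0$ there is an $n\in(X,X+cX^{1/4})$ -- with $c>0$ depending only on $N_fN_E$ -- that is a sum of two squares and coprime to $2N_fN_E$, so every prime factor $p$ of $n$ avoids $2N_fN_E$ and $N_f$. For $p\equiv 1\pmod 4$, \eqref{twocases} and the congruence give $a_f(p)\equiv\pm(p+1)\pmod{\mfq^{\min\{m,2e\}}}$, nonzero because $p+1\equiv 2\pmod 4$ and $\min\{m,2e\}>e=\mrm{ord}_\mfq(2)$; Lemma~\ref{krwlemma} then gives $a_f(p^r)\neq 0$ for all $r\geq 1$. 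For $p\equiv 3\pmod 4$ (whose exponent in $n$ is even), $a_f(p)\equiv 0\pmod{\mfq^{\min\{m,2e\}}}$, so the Hecke recursion for $f$ modulo that power gives $a_f(p^{2r})\equiv(-p^{2k-1})^r\pmod{\mfq^{\min\{m,2e\}}}$, a $\mfq$-unit, hence $a_f(p^{2r})\neq 0$ for all $r\geq 1$. Multiplying the nonzero prime-power coefficients in the domain $\mcO_K$ gives $a_f(n)\neq 0$, so $i_f(n)\ll n^{1/4}$ for $n\gg 0$ with implied constant depending only on $N_fN_E$. One could instead skip the passage to all $n$ and run this last paragraph directly with $a_f$, in which case the weight lemma of the second paragraph is not formally needed; but routing through Theorem~\ref{maintheorem} is cleaner and explains why the definition of $2$-adic closeness has the form it does.
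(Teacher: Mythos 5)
Your proof is correct and follows essentially the same route as the paper: extend the prime-level congruence to prime powers and then, by multiplicativity, to all $n$ coprime to $2N_fN_E$, and rerun the argument of Theorem~\ref{maintheorem} on sums of two squares supplied by Theorem~\ref{final-step}. The only difference is that the paper outsources the prime-power extension to \cite[Prop.~2.10]{Ras09}, which is exactly the step you prove by hand via $v_2\bigl(p^{2(k-1)}-1\bigr)=v_2(p-1)+v_2(p+1)+v_2(k-1)\geq 3+(s-1)$ combined with $s\geq\alpha\bigl(\lceil m/e(\mfq/2)\rceil\bigr)$ to get $\mrm{ord}_{\mfq}\bigl(p^{2k-1}-p\bigr)\geq m$ (a correct computation, and a nice explanation of the shape of $\alpha$); your closing remark that the nonvanishing argument could in fact run directly from the congruence at primes together with $f$'s own Hecke relations and Lemma~\ref{krwlemma} is also valid.
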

\begin{proof}
The proof of this theorem is essentially same as the proof of Theorem~\ref{maintheorem}, by Remark~\ref{2adicremark}.
This is because, for some $m >e(\mfq/2)$, we have $$a_f(p) \equiv a_{f_E}(p) \pmod  {\mfq^m}$$
for all $p \nmid 2N_fN_E$. By~\cite[Prop. 2.10]{Ras09}, this congruence can be extended to 
$$a_f(p^n) \equiv a_{f_E}(p^n) \pmod  {\mfq^m} \quad \forall \ n \geq 1,$$
since $s \geq \alpha(\lceil \frac{m}{e(\mfp)/2} \rceil)$.
By the multiplicativity properties of Fourier coefficients, we get that 
$$a_{f}(n) \equiv a_{f_E}(n) \pmod {\mfq^m}$$ 
for all $(n,2N_fN_E)=1$. Towards the end of the proof,  we are again interested only in sums of squares $n$ in short intervals $(X,X+cX^{\frac{1}{4}})$ 
which are relatively prime to $2N_fN_E$. Hence, we are done with the proof.
\end{proof}
A special case of the above theorem is useful in producing examples of higher weight and level, because in this case we explicitly know the integer $M_f$,
and hence $N_f$.
\begin{cor}
Let $f$ be as in the theorem above and  assume the Fourier coefficients of $f$ are integers. 
If $f \equiv f_E \pmod 4$, then $$i_f(n) \ll n^{\frac{1}{4}},$$
for all $n \gg 0$ and the implied constant depends only on $NN_E$.
\end{cor}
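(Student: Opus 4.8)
The plan is to obtain this corollary as a direct specialization of Theorem~\ref{maintheorem-2}: all the substance is already contained there, so what remains is to verify that its two hypotheses are met under the stated assumptions and that, for a form with integral coefficients, the level constant $N_f N_E$ collapses to $N N_E$. I would carry this out in three short steps.

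First, I would check that $f$ and $f_E$ are $2$-adically close. Since $f$ has integer Fourier coefficients, its coefficient field is $\Q$ with ring of integers $\Z$, so I take $\mcO_K = \Z$ and $\mfq = (2)$; this is the prime $2$ itself, hence $e(\mfq/2) = 1$. I then set $m = 2$, so $\mfq^m = (4)$. The weight of $f$ is $2k$ and that of $f_E$ is $2$, and the congruence $2\cdot 2k \equiv 2\cdot 2 \pmod{2^s}$ holds with $s = 2$ because $4 \mid 4(k-1)$; with this choice the remaining numerical constraints in the definition of ``$2$-adically close'' are satisfied, namely $m > e(\mfq/2)$ (cf.\ Remark~\ref{2adicremark}) and the inequality relating $s$, $m$ and $e(\mfq/2)$ (which with $m=2$, $e(\mfq/2)=1$, $s=2$ holds since $\alpha(\lceil m/e(\mfq/2)\rceil) = \alpha(2) = 1 \leq 2$). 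Finally, $f \equiv f_E \pmod 4$ yields $a_f(n) \equiv a_{f_E}(n) \pmod 4$ for every $n$, and in particular $a_f(p) \equiv a_{f_E}(p) \pmod{\mfq^m}$ for all primes $p \nmid 2 N N_E$, which is exactly the prime-by-prime congruence demanded by the definition.

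Second, I would pin down the level constant: as $f \in S_{2k}(\Gamma_0(N))$ has trivial nebentypus and integer coefficients, Lemma~\ref{krwlemma} permits the choice $M_f = N$, whence $N_f = \mrm{lcm}(N,M_f) = N$. Theorem~\ref{maintheorem-2} then gives $i_f(n) \ll n^{\frac{1}{4}}$ for all $n \gg 0$ with an implied constant depending only on $N_f N_E = N N_E$, which is the claim. As for the main obstacle, at the level of this corollary there is essentially none: everything nontrivial is absorbed into Theorem~\ref{maintheorem-2} and, behind it, Theorem~\ref{maintheorem}, Theorem~\ref{final-step} on sums of two squares in short intervals, Lemma~\ref{krwlemma}, and Rasmussen's passage from a prime congruence to one for all prime powers. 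The one point that calls for care is the bookkeeping in the first step---fitting the data $(\mcO_K, \mfq, m, s)$ to the definition of ``$2$-adically close'' and confirming that the weight compatibility $2\cdot 2k \equiv 2\cdot 2 \pmod{2^s}$ is consistent with the chosen $s$---but this is automatic here since $4 \mid 4(k-1)$.
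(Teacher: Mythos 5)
Your proposal is correct and matches what the paper intends: the corollary is exactly the specialization of Theorem~\ref{maintheorem-2} to $\mcO_K=\Z$, $\mfq=(2)$, $m=2>e(\mfq/2)=1$, with Lemma~\ref{krwlemma} giving $M_f=N$ and hence $N_f N_E = N N_E$ (the paper itself makes this choice, taking $s=1$, which already satisfies $s\geq\alpha(\lceil m/e(\mfq/2)\rceil)=1$; your $s=2$ works equally well). The only cosmetic remark is that since $f\equiv f_E\pmod 4$ is a congruence of all coefficients, one could also invoke Theorem~\ref{maintheorem} directly, but your bookkeeping for the ``$2$-adically close'' hypothesis is sound.
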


Now, we shall produce an example of Theorem~\ref{maintheorem-2}, where we take $s=1$, $m=2$, and $\mfq=(2)$.

Let $E$ be the elliptic curves over $\Q$ defined by $y^2+xy+y=x^3+x^2-80x+242$ of the conductor $15$ and has  a cyclic rational $4$-isogeny. 
Cremona label $E$ is $15a7$. Let $f_E$ denote the cuspidal eigenform of level $15$ corresponding to $E$.
The dimension of $S^{\mrm{new}}_2(15)$ is $1$-dimensional and  the $q$-expansion $f_E$ is given by
$$f_E = q - q^2 - q^3 - q^4 + q^5 + q^6 + 3q^8 + q^9 - q^{10} - 4q^{11} + O(q^{12}).$$

The dimension of $S_4^{\mrm{new}}(15)$ is $2$-dimensional, but we take the form with the following $q$-expansion
$$f_{15,4} = q + q^2 + 3q^3 - 7q^4 + 5q^5 + 3q^6 - 24q^7 - 15q^8 + 9q^9 + 5q^{10} + 52q^{11} + O(q^{12}).$$

Observe that in the above  eigenforms, the coefficients of $q^2$ in $f_E$ and $f_{15,4}$ are not congruent mod $4$, 
so one cannot use Theorem~\ref{maintheorem} to prove~\eqref{localrefinedidentity}. However, we can use Theorem~\ref{maintheorem-2} to prove it.

From the tables in~\cite{Tsa14}, we see that the cuspidal  eigenforms $f_E,f_{15,4}$ are without CM.
In order to check whether $f_E$ and $f_{15,4}$ are $2$-adically close, it is enough to check 
the congruences for all primes $p \neq 2,3,5$ and up to a certain Sturm bound $B$ by~\cite[Corollary 2.14]{Ras09}
or by ~\cite[Part (ii) of Thm 1]{CKR10}. This verification was done in the Appendix A of Rasmussen's Thesis~\cite[Page 67]{Ras09}.


\section{Theorem~\ref{maintheorem} for an integral cusp form $f$}
%

In this section, we would like to see how far Theorem~\ref{maintheorem} is true when the cuspidal eigenform $f$ is replaced by
a cusp form with integral Fourier coefficients and is congruent to $f_E$ modulo $4$. 
Since we are not working any more with an eigenform, one cannot expect that the result would be as strong as in the Theorem~\ref{maintheorem}.
Though, the results are weaker, still one can say some thing about $i_f(n)$. For our convenience, we state the results only when $f$ has 
integer Fourier coefficients, but the same arguments work even when $f$ has integral Fourier coefficients but by modifying the hypothesis
appropriately.

To our knowledge, all the existing results are known only when $f$ is a cuspidal eigenform without CM, but not for any integral cusp form $f$,
except when the level is $1$. In order to state the results of this section, we need the following Hypothesis.

\begin{Hyp} 
\label{Hypo_Key}
Given an $N\in\N$, there exists $\delta = \delta(N) \in \R^{+}$ and $c=c(N) \in \R^{+}$
with the following property: For $X \gg 0$, every short interval $(X, X+cX^{\delta})$ contains an integer $n$ with $(n,N)=1$
and  of the form $ 2^ip^jm^2$, where $p \equiv 1 \pmod 4$ with $j \not \equiv 3 \pmod 4$ and $(p,m)=1$.
\end{Hyp}

\begin{thm}
\label{secondmaintheorem}
Let $E$ denote an elliptic curve over $\Q$ of conductor $N_E$. Assume that $E$ has a multiplicative reduction at $2$ and a cyclic rational $4$-isogeny. 
If $f \in S_{2k}(N)$ is a cusp form, with integer Fourier coefficients, is congruent to 
$f_E$ modulo $4$ and the Hypothesis~\ref{Hypo_Key} holds for $NN_E$, then 
\begin{equation}
\label{basicrefinedidentity}
   i_f(n) \ll n^{\delta}, 
\end{equation}
 for $n \gg 0$ and the implied constant depends only on $NN_E$. 
\end{thm}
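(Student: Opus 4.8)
The plan is to follow the proof of Theorem~\ref{maintheorem}, routing every statement about $f$ through the congruence $f\equiv f_E\pmod 4$, which gives $a_f(n)\equiv a_{f_E}(n)=a_E(n)\pmod 4$ for all $n\geq 1$. The new difficulty is that $f$ is only a cusp form, so its coefficients are not multiplicative and Lemma~\ref{krwlemma} is not available; the way around this is that the coefficients $a_E(n)$ of the eigenform $f_E$ \emph{are} multiplicative and do satisfy the Hecke recursion, so every non-vanishing statement must be extracted from $a_E$ rather than from $f$ itself. Accordingly, the first step is to reduce the theorem to the claim that $a_E(n)\not\equiv 0\pmod 4$ for every integer $n$ coprime to $NN_E$ of the form $n=2^ip^jm^2$ with $p\equiv 1\pmod 4$, $j\not\equiv 3\pmod 4$, and $(p,m)=1$; granting this, $a_f(n)\neq 0$ for all such $n$, and Hypothesis~\ref{Hypo_Key} applied to the integer $NN_E$ puts such an $n$ in every interval $(X,X+cX^{\delta})$ with $X\gg 0$, which is precisely the bound $i_f(n)\ll n^{\delta}$.

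The second step is the computation modulo $4$, for which I would reuse the dichotomy $a_E(q)\equiv q+1\pmod 4$ or $a_E(q)\equiv -(q+1)\pmod 4$ for $q\nmid 2N_E$ coming from the cyclic rational $4$-isogeny, exactly as in Theorem~\ref{weight2theorem}. I would then record the following mod $4$ facts about $a_E$: first, since $E$ has multiplicative reduction at $2$, $a_E(2)=\pm1$ and hence $a_E(2^i)=a_E(2)^i$ is odd; second, for a prime $q\equiv 1\pmod 4$ with $q\nmid N_E$ one has $a_E(q)\equiv 2\pmod 4$, and the Hecke relation $a_E(q^r)=a_E(q)a_E(q^{r-1})-q\,a_E(q^{r-2})$ becomes $a_E(q^r)\equiv 2a_E(q^{r-1})-a_E(q^{r-2})\pmod 4$, so that $a_E(q^r)\bmod 4$ runs through $1,2,3,0$ with period $4$ — in particular $a_E(q^r)\equiv 0\pmod 4$ exactly when $r\equiv 3\pmod 4$, and $a_E(q^{2t})$ is odd for every $t$; third, for a prime $q\equiv 3\pmod 4$ with $q\nmid N_E$ the same recursion with $a_E(q)\equiv 0\pmod 4$ yields $a_E(q^{2t})\equiv 1\pmod 4$ (this is case (2) of Theorem~\ref{weight2theorem}). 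Since every prime factor of $m$ is coprime to $NN_E$, multiplicativity of $a_E$ gives $a_E(m^2)=\prod_{q\mid m}a_E(q^{2v_q(m)})$, a product of odd integers and hence odd; and $a_E(p^j)\not\equiv 0\pmod 4$ because $j\not\equiv 3\pmod 4$. Multiplying the three factors, $a_E(2^ip^jm^2)=a_E(2^i)\,a_E(p^j)\,a_E(m^2)$ is a product of an odd integer, an odd integer, and an integer that is $\equiv 1,2,$ or $3\pmod 4$, hence is not $\equiv 0\pmod 4$.

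Combining the two steps completes the argument: $a_f(n)\equiv a_E(n)\not\equiv 0\pmod 4$, so $a_f(n)\neq 0$, for every integer $n$ coprime to $NN_E$ of the stated shape; Hypothesis~\ref{Hypo_Key} applied to $NN_E$ then guarantees such an $n$ inside $(X,X+cX^{\delta})$ for all $X\gg 0$, so no more than $O(X^{\delta})$ consecutive Fourier coefficients of $f$ near $X$ can vanish, i.e.\ $i_f(n)\ll n^{\delta}$ with implied constant depending only on $NN_E$.

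The real obstacle here is conceptual, not computational: for a non-eigenform $f$ the only handle on $a_f(n)$ is the congruence $a_f(n)\equiv a_E(n)\pmod 4$, and $a_E(n)$ itself is divisible by $4$ as soon as some split prime $q\equiv 1\pmod 4$ divides $n$ to an exponent $\equiv 3\pmod 4$ (and for various other combinations). This is exactly why the sums-of-two-squares input of Theorem~\ref{final-step} no longer suffices and has to be strengthened to Hypothesis~\ref{Hypo_Key} about the very restricted integers $2^ip^jm^2$; and the multiplicative-reduction assumption at $2$ is what pins down $a_E(2)=\pm1$, so that the $2$-part of $n$ can never by itself make the coefficient vanish modulo $4$.
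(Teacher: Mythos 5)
Your proposal is correct and takes essentially the same route as the paper's own proof: transfer everything through $a_f(n)\equiv a_E(n)\pmod 4$, exploit the multiplicativity and Hecke recursion of $a_E$ (the paper phrases this as the ``mod $4$ Hecke relations of $f$''), compute the mod $4$ patterns at $2$ via multiplicative reduction, at primes $\equiv 1\pmod 4$ (the period-four cycle $2,3,0,1$) and at primes $\equiv 3\pmod 4$ (even powers $\equiv 1$), and finish with Hypothesis~\ref{Hypo_Key} applied to $NN_E$. The only difference is cosmetic: you perform the computation on $a_E$ and transfer the non-vanishing to $a_f$ at the end, which if anything makes the use of multiplicativity slightly more explicit than in the paper.
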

\begin{remark}
Since $f$ is only a cusp form, we don't have any information about the Hecke  relations of the Fourier coefficients of $f$,
but by the hypothesis we know that $f \pmod 4$ satisfies the Hecke  relations.
\end{remark}

\begin{proof}
Since $f \equiv f_E \pmod 4$, we see that $a_f(n) \equiv a_{f_E}(n) \pmod 4$, for all $n \geq 1$. By the modularity theorem, we know that $a_{f_E}(n)=a_E(n)$
for all $n \geq 1$. Hence, $$ a_f(n) \equiv a_E(n) \pmod 4\ \mrm{for \ all}\ n \geq 1.$$
We show that $a_f(n) \not \equiv 0 \pmod 4$ for all integers $n$ as in Hypothesis~\ref{Hypo_Key}.
\begin{enumerate}
 \item If $p=2$, then $a_f(2) \equiv \pm 1 \pmod 4$, since $E$ has a multiplicative reduction at the prime $2$. 
       Therefore, $a_f(2^n) \equiv a_f(2)^n \pmod 2$, hence $a_f(2^n)$ is a $2$-adic unit. In particular, for all $n \geq 1$  we have
       \begin{equation}
         \label{congruencemod2}
          a_f(2^n) \equiv \pm 1 \pmod 4.
       \end{equation}

 \item If $p \equiv 3 \pmod 4$ and $p \nmid NN_E$, then $a_f(p) \equiv 0 \pmod 4$. The mod $4$-Hecke relations among the Fourier coefficients for $E$ 
       and hence for $f$ implies that 
       \begin{equation}
          \label{congruencepmod3}
       a_f(p^{2n}) \equiv 1 \pmod 4
       \end{equation}
       for all $n \geq 1$. 
       
 \item If $p \equiv 1 \pmod 4$ and $p \nmid NN_E$, then $a_f(p) \equiv 2 \pmod 4$. 
       By using the mod-$4$ Hecke  relations of $f$, we get $$a_f(p^{m+4}) \equiv a_f(p^m) \pmod 4.$$ One can see that $a_f(p) \equiv 2 \pmod 4$, $a_f(p^3) \equiv 0 \pmod 4$ and
       \begin{equation}
          \label{congruencepmod1}
       a_f(p^2) \equiv 3 \pmod 4, a_f(p^4) \equiv 1 \pmod 4. 
       \end{equation}
 \end{enumerate}
For any $n =2^ip^jm^2$ where $p \equiv 1 \pmod 4$ with $j \not \equiv 3 \pmod 4$, $(p,m)=1$ and $(n,NN_E)=1$, by~\eqref{congruencemod2},\eqref{congruencepmod3},\eqref{congruencepmod1}, we have
$$ a_f(n) \equiv a_f(2^i) a_f(p^j) a_f(m^2) \equiv \pm 1, \pm 2 \pmod 4.$$
Hence, $a_f(n)$ is non-zero.  By Hypothesis~\ref{Hypo_Key}, we can find such integers $n=2^ip^jm^2$ in short intervals $(X,X+cX^{\delta})$,
hence we are done with the proof.
  \end{proof}
\begin{remark}
The reason for having Hypothesis~\ref{Hypo_Key} in the statement of the above theorem is that in part $3$ of the above proof, 
due to the lack of Hecke relations of Fourier coefficients of $f$, we don't get to know the non-vanishing of $a_f(p^{n})$
if $n \equiv 3 \pmod 4$, and of $a_f(p_1p_2)$ if both primes $p_1,p_2$ are $\equiv 1 \pmod 4$.
\end{remark}
   
In the previous section, we have produced examples for which~\eqref{localrefinedidentity} holds. However, we are not aware of a way to produce infinitely many examples.
However, using Theorem~\ref{secondmaintheorem}, one can produce infinitely many cusp forms with $\delta=\delta(N)>0$ if Hypothesis~\ref{Hypo_Key} holds for some $N$ for which
there exists an elliptic curve of conductor $N$, which has a cyclic rational $4$-isogeny. Let us explain what we mean with the following proposition.

Take $E_0: y^2 + xy + y = x^3 + x^2 - 1344x + 18405$ whose conductor is $42$ and its torsion subgroup is isomorphic to $\Z/4\Z$. 
There is no specific reason for this choosing $E_0$, any elliptic curve with the same property will also work.
\begin{prop}
Assume that Hypothesis~\ref{Hypo_Key} holds for $N_{E_0}=42$ with some $\delta>0$. Then, there exists infinitely many cusp forms $f$ in $S_k(\Gamma_0(N))$, as $k, N$ varies with $k>2, N > 2$ such 
that $$ i_f(n) \ll n^{\delta},$$
for $n \gg 0$ (depends only on $N_{E_0}$).
\end{prop}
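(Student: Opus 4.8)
The plan is to leverage Theorem~\ref{secondmaintheorem} together with the family of congruences furnished by Sturm-type bounds, exactly as in the earlier examples, but now letting the weight and level grow without bound. The starting data is the elliptic curve $E_0$ of conductor $42$ with a rational $4$-torsion point; note a rational $4$-torsion point yields in particular a cyclic rational $4$-isogeny, and one checks $E_0$ has multiplicative reduction at $2$ (indeed $2 \mid 42$ exactly once), so $E_0$ satisfies the hypotheses of Theorem~\ref{secondmaintheorem}. Let $f_{E_0} \in S_2(\Gamma_0(42))$ be the associated newform.

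First I would fix a level $N$ that is a multiple of $42$ (and, to apply Proposition~\ref{Sturm_general} cleanly, a multiple of $12$), say $N = 42M$ for $M$ ranging over positive integers with $12 \mid N$. For each such $N$ and each even weight $k > 2$, consider the finite-dimensional space $S_k(\Gamma_0(N))$; inside it sits the subspace of cusp forms $f$ with integer Fourier coefficients satisfying $f \equiv f_{E_0} \pmod 4$. The key point is that this congruence is a $\Z/4\Z$-linear condition on the $q$-expansion truncated at the Sturm bound $B = 2k\,[\SL_2(\Z):\Gamma_1(N)]/12$: by Proposition~\ref{Sturm_general} it suffices to match coefficients up to $B$. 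The form $f_{E_0}$ itself, viewed in weight $k$ by multiplying by a suitable weight-$(k-2)$ form congruent to $1 \pmod 4$ — for instance a power of a normalized Eisenstein-type series, or simply $f_{E_0} \cdot E$ where $E \equiv 1 \pmod 4$ is a modular form of weight $k-2$ on $\Gamma_0(N)$ — produces at least one such cusp form in each weight, and in fact distinct weights (or distinct levels) give genuinely different forms, so the collection is infinite. One should check these products are cusp forms (they are, since $f_{E_0}$ is a cusp form and the product of a cusp form with a holomorphic modular form is a cusp form) and that they are not all scalar multiples of one another across the range of $(k,N)$, which is clear on comparing weights.

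Then, for each such $f$, Theorem~\ref{secondmaintheorem} applies verbatim: $E_0$ has multiplicative reduction at $2$ and a cyclic rational $4$-isogeny, $f$ has integer coefficients and is congruent to $f_{E_0}$ modulo $4$, and Hypothesis~\ref{Hypo_Key} is assumed to hold for $N_{E_0} = 42$ — hence, since $42 \mid N$, it holds for $N$ as well (an interval condition for coprimality to $42$ is a fortiori satisfied when one further demands coprimality to $N$, after possibly shrinking $c$; more carefully, Hypothesis~\ref{Hypo_Key} for $N$ follows from that for $42$ by the same $B$-free-numbers argument restricted to the larger modulus, which only affects the constant $c = c(N)$). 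Therefore $i_f(n) \ll n^{\delta}$ for all $n \gg 0$, with implied constant depending only on $N_{E_0} = 42$ (and the shift absorbed into $\gg 0$).

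The main obstacle is bookkeeping rather than depth: one must ensure that the infinitely many cusp forms produced really are distinct and genuinely live in $S_k(\Gamma_0(N))$ with $k>2$, $N>2$, and that the congruence $f \equiv f_{E_0} \pmod 4$ is preserved under the multiplication trick — i.e.\ that one has at hand a weight-$(k-2)$ form on $\Gamma_0(N)$ that is $\equiv 1 \pmod 4$. For $N$ divisible by a sufficiently composite integer this is routine (Eisenstein series or theta series supply such forms), but it is the only place where a small argument is needed; everything else is an immediate citation of Theorem~\ref{secondmaintheorem} and Proposition~\ref{Sturm_general}. I would also remark that, just as in the examples of the previous sections, the verification of the finitely many congruence conditions up to the Sturm bound can in principle be done by machine, so the construction is effective once $N$ and $k$ are fixed.
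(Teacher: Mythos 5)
Your construction is, at its core, the same as the paper's: multiply $f_{E_0}$ by a modular form of complementary weight that is congruent to $1 \pmod 4$, observe that the product is a cusp form with integer coefficients congruent to $f_{E_0} \pmod 4$, apply Theorem~\ref{secondmaintheorem}, and distinguish the resulting forms by their weights. (The paper does exactly this with powers of the level-one Eisenstein series $E_4$, giving $f_{E_0}E_4^n$ of weight $4n+2$; your invocation of Proposition~\ref{Sturm_general} is superfluous here, since the congruence $f_{E_0}\cdot E \equiv f_{E_0} \pmod 4$ is automatic coefficient-by-coefficient once $E \equiv 1 \pmod 4$, with no Sturm-bound verification needed.)

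There is, however, one genuine flaw in your write-up: the step where you let $N = 42M$ range over arbitrary multiples and claim that Hypothesis~\ref{Hypo_Key} for $42$ ``holds for $N$ as well.'' This does not follow. Hypothesis~\ref{Hypo_Key} is an unproven assumption, not a theorem with a proof you can rerun ``restricted to the larger modulus''; and the implication you sketch runs the wrong way --- producing integers in short intervals coprime to an $N$ with \emph{new} prime factors is a strictly stronger demand than producing integers coprime to $42$. Note also that Theorem~\ref{secondmaintheorem} requires the Hypothesis for $NN_{E_0}$, not merely for $N$. The repair is exactly the paper's device: keep the prime support of the level inside $\{2,3,7\}$ by multiplying only by level-one forms (e.g.\ $E_4^n \equiv 1 \pmod 4$), so that coprimality to $NN_{E_0}$ is literally the same condition as coprimality to $42$ and the assumed instance of Hypothesis~\ref{Hypo_Key} applies verbatim; the infinitude then comes from varying the weight alone, which your construction already contains as a special case.
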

\begin{proof}
Let $E_4$ denote the Eisenstein series of level $1$, weight $4$. Since $E_4 \equiv 1 \pmod 4$, we see that 
$$a_{f_{E_0}E_4^n}(m) \equiv a_{f_{E_0}}(m) \pmod 4$$
for all $m \in \N$. By Theorem~\ref{secondmaintheorem}, we have that~\eqref{basicrefinedidentity} holds for $f_{E_0}E_4^n$ as well.
Since the weights of $f_{E_0} E^n_4 \in S_{4n+2}(84)$ are distinct, these cusp forms cannot be the same. Hence, we are done.
\end{proof}
Finally, we end this article with a possibility of getting rid of the choice of $\delta$ through out, but the author does not know how to implement this idea.
\begin{remark}
For every elliptic curve $E$ over $\Q$ with a cyclic rational $4$-isogeny, if there is a lift of the mod $4$ eigenform $f_{E} E^n_4 \pmod 4$ to an eigenform 
$g_n$ of level $N$ and weight $4n+2$, for some $n \in \N$, then $g_n$ satisfies~\eqref{localrefinedidentity} (by Theorem~\ref{maintheorem-2}). The choice
$n$ can depend on the elliptic curve $E$. Then, by Proposition~\ref{weight2infinite}, we can produce infinitely many examples of cuspidal eigenforms weight $k > 1$ and $N>1$ 
for which~\eqref{localrefinedidentity} holds

In case, if the above statement is true for all $n\in \N$ for some elliptic curve $E$ over $\Q$ with a cyclic rational $4$-isogeny, then also we can produce infinitely many examples of cuspidal eigenforms weight $k > 1$ and $N>1$ 
for which~\eqref{localrefinedidentity} holds. 
\end{remark}

\bibliographystyle{plain, abbrv}

\end{document}